\documentclass[a4paper,11pt]{article}
\title{}
\author{}
\usepackage{float}
\usepackage{graphicx} 
\usepackage{amsthm}
\usepackage{amsmath}
\usepackage{amsfonts} 
\usepackage{amsopn}
\usepackage{relsize}
\usepackage{enumerate}
\usepackage{amssymb}
\usepackage{tikz}
\usetikzlibrary{arrows}
\usepackage{scalefnt}
 
\parskip 7.2pt

\newcommand\dist[2]{d\left(#1, #2\right)}

\DeclareMathOperator*{\centroid}{\mathlarger{\Conv}}
\newtheorem{theorem}{Theorem}[section]
\newtheorem{definition}{Definition}[section]

\newtheorem{lemma}{Lemma}[section]
\newtheorem{proposition}{Proposition}[section]

\newcommand{\Conv}{\mathop{\scalebox{1.5}{\raisebox{-0.2ex}{$\ast$}}}}%
\begin{document}

\title{The Existence of a  Billiard Orbit in the Regular Hyperbolic Simplex}
\date{}
\author{Oded Badt, \ \ Yaron Ostrover}
\maketitle
\begin{abstract}
In this note we  establish the existence of a $n+1$ periodic billiard trajectory inside an $n$-dimensional regular 
simplex in the hyperbolic space, which hits  the interior of every facet exactly once. 
\end{abstract}
\section{Introduction and Result}

The billiard dynamical system describes the motion of a massless particle in a
domain with a perfectly reflecting boundary (see e.g.~\cite{KT,Tab}  for two
excellent surveys on the subject). A particular intriguing class of examples,
whose dynamics is in general neither integrable nor chaotic, is the class of
polygonal billiards. On the one hand, this class serves as a promising model for
quantum chaos~\cite{BR}, and on the other  it is closely related  to geodesic flows
on flat surfaces and Teichm\"uller  dynamics~\cite{MT}.

In 1775, J.F. de Tuschis a Fagnano observed that in every acute triangle in the
Euclidean plane, the orthic triangle, whose vertices are the feet of the
altitudes, represents a periodic billiard trajectory. Nevertheless, the
existence of periodic billiard orbits for polygonal billiards turns out to be a
challenging  question, even for seemingly simple examples like obtuse triangles
in the plane (see~\cite{VorGalSte,Gut}). Nearly nothing is known in higher
dimensions.

Recently, using barycenter coordinates, the existence of a Fagnano periodic
billiard trajectory inside the regular simplex in the Euclidean space
${\mathbb E}^n$  was established in~\cite{BeRa}.  Despite the lack of linear
structure, 
in this note we extend the result of~\cite{BeRa} to the hyperbolic
space $\mathbb{H}^n$. More precisely, we consider hyperbolic regular simplices
i.e., the convex hulls of $n + 1$ points (vertices) in the hyperbolic space
for which all the distances between two distinct vertices are equal. The
billiards dynamics inside a hyperbolic simplex is defined in much the same way
as in the Euclidean case: the particle moves along geodesic arcs 
 within the simplex, interrupted by elastic collisions against the boundary where
the motion undergoes a specular reflection (see Section~\ref{sec-Background} below 
for the precise definition).

Our main result in this note is the following:

\begin{theorem} \label{main-thm} Let $\triangle^n$  be a regular $n$-simplex with a
given edge length in the hyperbolic space  ${\mathbb H}^n$. Then, there exists an
$(n+1)$-periodic billiard trajectory inside $\triangle^n$  which hits  the interior of
every facet exactly once.  \end{theorem}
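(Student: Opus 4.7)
My approach is variational. Let $F_0, \ldots, F_n$ denote the facets of $\triangle^n$ (where $F_i$ is opposite the vertex $v_i$), and consider the perimeter functional on the compact product $\mathcal{M} = F_0 \times \cdots \times F_n$:
\[
L(x_0, \ldots, x_n) = \sum_{i=0}^{n} \dist{x_i}{x_{i+1}}, \qquad \text{indices mod } n+1.
\]
A minimizer $\mathbf{x}^\ast$ exists by compactness and continuity. The first-variation formula for geodesic distance shows that at any critical point of $L$ with each $x_i^\ast$ in the relative interior of $F_i$, the projection onto $T_{x_i^\ast} F_i$ of the incoming and outgoing unit velocities at $x_i^\ast$ coincide; combined with the fact that both velocities point into the simplex, this is precisely the specular reflection law. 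The theorem thus reduces to producing a minimizer of $L$ whose components all lie in the interior of their respective facets.

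To construct such an interior minimizer I would exploit the cyclic symmetry of $\triangle^n$. Let $\sigma$ be the isometry of $\mathbb{H}^n$ realizing the vertex-permutation $v_i \mapsto v_{i+1 \bmod (n+1)}$; it cyclically permutes the facets and preserves $L$. By Palais's principle of symmetric criticality it suffices to minimize $L$ on the $\sigma$-invariant subspace $\{(x, \sigma x, \ldots, \sigma^n x) : x \in F_0\}$, where $L$ reduces to $(n+1)f(x)$ with $f(x) := \dist{x}{\sigma(x)}$. Since $\mathbb{H}^n$ is a CAT$(-1)$ space and $F_0$ is totally geodesic in $\mathbb{H}^n$, $f$ is strictly convex on the convex set $F_0$: for any non-constant geodesic $\gamma \subset F_0$, the curve $\sigma\circ\gamma$ lies in the distinct facet $F_1 = \sigma(F_0)$, so $\gamma$ and $\sigma\circ\gamma$ are distinct hyperbolic geodesics and $t \mapsto \dist{\gamma(t)}{\sigma(\gamma(t))}$ is strictly convex. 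Hence $f$ admits a unique minimizer $x_0^\ast \in F_0$, which by invariance of $f$ under the involution $\iota \in S_{n+1}$ fixing $v_0$ and sending $v_j \mapsto v_{n+1-j}$ (the only non-identity element of the dihedral symmetry group $D_{n+1}$ of $L$ that preserves $F_0$; it satisfies $\iota\sigma\iota = \sigma^{-1}$) must lie in the totally geodesic sub-simplex $F_0^\iota \subseteq F_0$.

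The main obstacle is proving $x_0^\ast \in \operatorname{int}(F_0)$. By strict convexity of $f$ this reduces to exhibiting an interior test point $p \in \operatorname{int}(F_0)$ with $f(p) < \min_{q \in \partial F_0} f(q)$. A preliminary $\iota$-symmetrization shows that one may assume the boundary minimum is attained on $\partial F_0^\iota = F_0^\iota \cap \partial F_0$: if instead it were attained at some $q^\ast \notin F_0^\iota$, then the geodesic midpoint $r$ of $q^\ast$ and $\iota(q^\ast)$ would be $\iota$-fixed, satisfy $f(r) < f(q^\ast)$ by strict convexity, and necessarily lie in $\operatorname{int}(F_0)$ (were $r$ in $\partial F_0$, this would contradict $q^\ast$ minimizing $f$ on $\partial F_0$), already giving the desired test point. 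I would take $p$ to be the geodesic centroid $m_0$ of $\{v_1, \ldots, v_n\}$, which belongs to $F_0^\iota$ as it is in fact $S_n$-invariant. Working in the hyperboloid model of $\mathbb{H}^n$ and applying the hyperbolic law of cosines, one computes $f(m_0) = \dist{m_0}{m_1}$ and the values of $f$ at the vertices of $F_0^\iota$---the $\iota$-fixed midpoints $\tfrac12(v_j + v_{n+1-j})$, and, when $n$ is odd, the vertex $v_{(n+1)/2}$ at which $f = a$ is the common edge length---in closed form, so that the required inequality $f(m_0) < f(q)$ throughout $\partial F_0^\iota$ reduces to a finite system of hyperbolic-trigonometric inequalities in the single variable $a$; verifying these for arbitrary $a > 0$ is the principal technical step. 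Once the interior claim is in hand, $\mathbf{x}^\ast = (\sigma^i x_0^\ast)_{i=0}^n$ is the desired $(n+1)$-periodic billiard trajectory, with non-degeneracy following because the only fixed point of $\sigma$ in $\mathbb{H}^n$ is the incenter of $\triangle^n$, which does not lie on $F_0$.
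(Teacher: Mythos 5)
Your variational route is genuinely different from the paper's (which writes the bouncing points explicitly as hyperbolic centroids $P_j=\centroid_{k}(V_{k+j},\alpha_k)$ for a carefully constructed weight sequence and verifies the reflection law by center-of-mass identities), and its skeleton is sound: the cyclic reduction to $f(x)=\dist{x}{\sigma(x)}$ on $F_0$ is consistent with the paper's orbit, which is indeed of the form $P_{j}=\tilde\sigma^{j}P_0$ and is fixed by your involution $\iota$. But the proof has a genuine gap exactly where the real difficulty of the problem lives, namely in showing $x_0^\ast\in\operatorname{int}(F_0)$. Your plan is to compare $f(m_0)$ with the values of $f$ at the \emph{vertices} of $F_0^{\iota}$ and call this ``a finite system of inequalities.'' That reduction is invalid: you need a \emph{lower} bound for $f$ on the continuum $\partial F_0^{\iota}$, and convexity of $f$ bounds it \emph{above}, not below, by its values at the vertices of a face. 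The minimum of the strictly convex function $f$ over a positive-dimensional face of $\partial F_0^{\iota}$ can be attained anywhere in that face and is not controlled by vertex values; locating it is an optimization problem of essentially the same nature as the original one. So the principal technical step is not merely ``unverified'' --- the stated reduction to finitely many hyperbolic-trigonometric inequalities in $a$ does not exist as described. (This is the standard obstruction in variational approaches to polyhedral billiards: minimizers want to escape to the singular part of the boundary, which is why the problem is open for general polyhedra; the paper circumvents it by exhibiting an explicit interior critical configuration rather than by excluding boundary minimizers.)

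Two smaller points. First, ``distinct geodesics in a CAT$(-1)$ space give strict convexity of $t\mapsto\dist{\gamma(t)}{\sigma\gamma(t)}$'' needs the no-flat-strip property plus the observation that $\gamma$ and $\sigma\circ\gamma$ do not lie on a common geodesic line of $\mathbb{H}^n$; for $\gamma$ through an interior point of $F_0$ this is fine, but it should be said, since for distinct lines lying on a common geodesic the function is only piecewise linear. Second, even if you establish interiority of the minimizer, you should note explicitly that at an interior, nondegenerate critical point the two unit vectors from $x_i$ toward $x_{i\pm1}$ have a nonzero common normal component pointing into the simplex (so the orbit genuinely reflects rather than being tangent to the facet); this follows from convexity of $\triangle^n$ once $x_{i\pm1}$ are interior to their own facets, but it is part of matching Definition~\ref{def-billiard-traj}. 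As it stands, the proposal is a plausible program, not a proof.
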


The proof of Theorem~\ref{main-thm}  follows the steps of~\cite{BeRa},  
where the main new input being the approach by which we  overcome several difficulties arising from 
the lack on linear structure in the hyperbolic space.  Moreover, the same approach can be used to 
obtain the  hyperbolic analog of the  $2n$-periodic orbit constructed in~\cite{BeRa}.
The details are spelled out in~\cite{OB}.

\noindent{\bf Remark I:}  {\rm  In the case where $n=2$, the regular simplex is
an equilateral triangle in the hyperbolic plane and the billiard orbit
provided by the theorem above  coincides with the orthic triangle i.e.,  the
triangle whose vertices are the endpoints of the altitudes, which is the  well
known Fagnano billiard trajectory.   In contrast with the two-dimensional
case, for $n>2$, a direct computation shows that as in the Euclidean case, the
trajectory  connecting the midpoint of the facets, which in  the regular
simplex coincides with the trajectory connecting the endpoints of the
altitudes,   fails to form a billiard orbit.  We do not know the precise
geometric (or physical) meaning of the bouncing points of the billiard orbit
provided by the theorem above.   }

\noindent{\bf Remark II:}  {\rm On top of the theoretical mathematical interest in studying billiard dynamics in the framework of hyperbolic geometry, it also has several implications to physics. As an example, we mention the remarkable connection between certain polyhedral billiards in the hyperbolic space and solutions to the vacuum Einstein equations in the vicinity of a space-like singularity, which was uncovered in a series of works starting with the pioneering papers of Belinskii, Khalatnikov and Lifshitz (see e.g.,~\cite{DH,DHN,DHN1} and the references therein). }

\noindent{\bf Structure of the paper:} In Section~\ref{sec-Background} we
recall some relevant facts from hyperbolic geometry, and introduce some of the
technical ingredients needed later in the proof of Theorem~\ref{main-thm},
which in turn is given in  Section~\ref{section-proof-main-theorem}.

\noindent{\bf Acknowledgement:} 
The second named author was partially supported by a Reintegration Grant SSGHD-268274 within the 7th European
community framework programme, and by the ISF grant No. 1057/10. 

\section{Background from Hyperbolic Geometry} \label{sec-Background}

In this section we first recall some relevant notions and facts from
hyperbolic geometry.  For a detailed exposition of the subject, see e.g, the
books~\cite{AleVinbSolo}. Then, in Subsection~\ref{section-regular-simplices},
we provide  the main ingredients in the proof of Theorem~\ref{main-thm} above.

The $n$-dimensional hyperbolic space ${\mathbb H}^n$ is the unique simply
connected and complete $n$-dimensional  Riemannian manifold of constant
curvature $-1$.    In what follows we shall denote by $d$ the corresponding
hyperbolic metric. Among the several models for the hyperbolic space, one can
consider the half-space conformal model (also denoted by ${\mathbb H}^n$ to
simplify notation) given by the metric space \begin{align*} {\mathbb H}^n =
\left ( {\mathbb E}^n_{+}, \ ds^2 = {\frac {dx_1^2 + \cdots+dx_n^2} { x_1^2 }}
\right), \end{align*} where ${\mathbb E}^n_{+} =  \{ (x_1,\ldots,x_n) \in
{\mathbb E}^n \, | \, x_1 >0 \}$ is the upper half space of the Euclidean
space ${\mathbb E}^n$.

The compactification $\overline {\mathbb H}^n = {\mathbb H}^n \cup \partial
{\mathbb H}^n$ consists of ${\mathbb H}^n$ together with the set  $\partial
{\mathbb H}^n =  {\mathbb E}^{n-1} \cup \{ \infty  \}$ of its points at
infinity.  It is well known that any two points $A,B \in {\mathbb H}^n$ can be
joined by a unique  geodesic (h-line), we shall denote the geodesics segment connecting them
by $[AB]$.
A hyperplane in ${\mathbb H}^n$ is a codimension-one totally geodesic subspace of ${\mathbb H}^n$, 
which divides 
the hyperbolic space into two half-spaces (see e.g.,~\cite{AleVinbSolo}, Chapter 1, \textsection 3).
 Note that any hyperplane is isometric to ${\mathbb H}^{n-1}$. For example, 
in the half space model mentioned above, the geodesic hyperplanes are $(n-1)$-spheres and $(n-1)$-planes orthogonal to $\partial {\mathbb H}^n$.
As in the Euclidean case, a reflection in the hyperbolic space ${\mathbb H}^n$ with respect to a given hyperplane is an isometric involution 
fixing point-wise  the hyperplane and isometrically interchanging the two half-spaces of ${\mathbb H}^n$ associated with it.
It is well known that there is a unique reflection in
every hyperplane in ${\mathbb H}^n$.

Finally, a set $ X \subset {\mathbb H}^n$ is said to be {\it convex}  if for any
two points $A,B \in X$ it contains the segment $[AB]$. The convex hull of a
set $Y \subset {\mathbb H}^n$ is the intersection of all the convex sets in
${\mathbb H}^n$ containing  $Y$.

\subsection{Hyperbolic center of mass}

Following~\cite{Galp,Stahl}, we define the notion of center of mass in
hyperbolic space. A point mass is an ordered pair $\left(X,x\right)$, where its
location $X \in {\mathbb H}^n$ is a point of the hyperbolic space and its weight
$x$ is a non-negative real number. \begin{definition}[\cite{Galp,Stahl}] \label
{def-centroid} Given any two point-masses $\left(X, x\right)$ and $\left(Y,
y\right)$, their center of mass, or $centroid$, $\left(X, x\right) \Conv \left(Y,
y\right)$ is the  point mass $\left(Z, z\right)$, such that $Z$ is the unique
point that lies on the segment $[XY]$ and satisfies
\begin{align*}
  x\sinh d(X,Z) = y\sinh d(Y,Z).
\end{align*}
Its corresponding mass is given by
\begin{align*}
 z = x\cosh{d\left(X, Z\right)} + y\cosh{d\left(Y, Z\right)}.
\end{align*}
\end{definition}

As shown in~\cite{Galp, Stahl}, the  operator $\Conv$ is well defined, commutative
and associative. This allows, in particular, to define the {\it centroid} of a
finite set of point masses. Moreover, it follows immediately from Definition~\ref{def-centroid}
that $$(X, w_1) \Conv (X, w_2) = (X, w_1 + w_2), \  \ \  \, (X, w) \Conv (Y,0) = (X,w),$$
and that for every non-negative real number $\lambda$ one has,
\begin{equation*} \label{linearity-of-centroid} 
(X, x) \Conv (Y, y) = (Z, z) \, \Leftrightarrow \,  (X, \lambda x) \Conv (Y, \lambda
y) = (Z, \lambda z). \end{equation*}
Furthermore, since the  center of mass is defined solely by
means of geodesics and distances along them, it commutes with isometries. 
More precisely, for every isometry $\sigma$ of the hyperbolic space ${\mathbb H}^n$ one has
 $$(X, x) \Conv (Y, y) = (Z, z) \,
\Leftrightarrow \, (\sigma X, x) \Conv (\sigma Y, y) = (\sigma Z, z).$$

\subsection{Regular simplices in $\mathbb{H}^n$} \label{section-regular-simplices}

In this subsection we introduce some facts regarding regular simplices in the hyperbolic space. 
In particular, we compute the centroid of the regular simplex (with unit mass on the vertices), and prove
Proposition~\ref{corollary-combined-mass-equation}, which plays a key role in the proof of Theorem~\ref{main-thm}.
We start with the following:

\begin{definition} An $n$-simplex $\triangle^n$ in ${\mathbb H}^n$ is the
convex hull of $n+1$ points in ${\mathbb H}^n$, called vertices.  It is said to
be regular if every permutation of its vertices is induced by an isometry of
${\mathbb H}^n$.
\end{definition}

It is well known (see e.g.~\cite{AleVinbSolo}, Chapter 6, \textsection 2), that up to isometries of $\mathbb{H}^n$, 
for every $a\in\mathbb{R}_+$ there is a unique hyperbolic regular $n$-simplex 
in $\mathbb{H}^n$ with edge length $a$. In what follows we shall denoted this simplex by
$\triangle^n_a$.

\begin{definition} \label{regular-simplex-midpoint-def} A point $C\in{\mathbb
H}^n$ that is equidistant from all the vertices of the simplex $\triangle_a^n$
is called a {\it midpoint} of $\triangle_a^n$. \end{definition} It is not hard
to check that for any $n \in {\mathbb N}$ and $a >0$, there is a unique
midpoint $C_a^n$  of $\triangle_a^n$.

\begin{definition} \label{facet-definiton} Let $\triangle^n \subset {\mathbb H}^n$ be a
regular hyperbolic $n$-simplex with vertices $\{V_0,\ldots,V_n\}$.
For every $0 \leq j \leq n$, the $j$-facet of  $\triangle^n$, denoted by $F_j$,  is the 
regular $(n-1)$-simplex given by the convex hull of the vertices  $\left\{V_k\right\}$, where $0 \le k \le
n$, and $k \neq j$. In what follows we shall denote by $\sigma_j$ the reflection in ${\mathbb H}^n$ with respect
to the (unique) hyperplane in which $F_j$ lies. 
\end{definition}

In the following proposition we gather some basic properties of the regular
simplex in the hyperbolic space (see Figure~\ref{fig:right-angle-triangle-in-simplex} below).

\begin{proposition} \label{prop-about-distances-midpoint-vertices}
\label{hyperbolic-regular-simplex-measurements}
Let $\triangle^n_a$ be a hyperbolic regular $n$-simplex with edge length $a \in {\mathbb R}^+$.
Let $\left\{V_0,\ldots,V_n\right\}$ be its vertices, $C_n$ its midpoint, and
$C_{n-1}$ the midpoint of the facet $F_n$ defined by
$\left\{V_0,\ldots,V_{n-1}\right\}$. Then for any $n \geq 1$, 
\begin{itemize}
\item[(i)] $C_n\in\left[C_{n-1}V_n\right]$, \vskip4pt \label{midpoint-on-keel}
\item[(ii)] $\angle C_n C_{n-1}V_j = \angle V_nC_{n-1}V_j = \frac{\pi}{2}, \ {\rm for \ every \ } 0 \le j \le n \ {\rm and} \ n>1, $  \vskip4pt \label{right-angle-in-simplex}
\item[(iii)] $\cosh^2{d\left(V_j, C_n\right)} = \frac{n\cosh{a} + 1}{n + 1}, \ {\rm for \ every \ } 0 \le j \le n, $ \vskip4pt \label{vertex-to-midpoint-distance}
\item[(iv)] $\cosh^2{d\left(V_n, C_{n-1}\right)} = \frac{n\cosh^2{a}}{\left(n-1\right)\cosh{a} + 1}.$ \label{vertex-to-facet-midpoint-distance}
\end{itemize} 
\end{proposition}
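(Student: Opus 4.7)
The plan is to prove (i) and (ii) using the symmetry group of $\triangle_a^n$, and then to establish (iv) and (iii) together by induction on $n$ using hyperbolic trigonometry inside the right triangles produced by (ii).

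For (i) and (ii), regularity of $\triangle_a^n$ provides a subgroup of $\mathrm{Isom}(\mathbb{H}^n)$ isomorphic to $S_{n+1}$ acting on $\{V_0,\ldots,V_n\}$ by all permutations. Let $G\cong S_n$ denote the stabilizer of $V_n$. Since $G$ permutes $V_0,\ldots,V_{n-1}$ transitively, $C_{n-1}$ (the unique midpoint of $F_n$) is $G$-fixed; $V_n$ is $G$-fixed by construction; and $C_n$, being fixed by the full $S_{n+1}$-action, is in particular $G$-fixed. The fixed set $\mathrm{Fix}(G)\subset\mathbb{H}^n$ is totally geodesic, and the induced $G$-action on $T_{V_n}\mathbb{H}^n$ permutes the $n$ initial velocities of the geodesics $[V_nV_j]$, $j<n$, decomposing as the trivial $\oplus$ standard irreducible representation of $S_n$; in particular $\mathrm{Fix}(G)$ is one-dimensional, hence is a single geodesic $\gamma$ containing $V_n$, $C_{n-1}$, and $C_n$. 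Convexity of $\triangle_a^n$ then places $C_n$ on the segment $[V_nC_{n-1}]$, giving (i). For (ii), the reflection $\sigma_n$ in the hyperplane $H$ containing $F_n$ preserves $H$ and commutes with $G$ (which also preserves $H$ set-wise, as $H$ is the unique hyperplane through $V_0,\ldots,V_{n-1}$), hence $\sigma_n(\gamma)=\gamma$. Since $V_n\notin H$, the point $\sigma_n(V_n)\neq V_n$ lies on $\gamma$, so $\gamma$ is the geodesic joining $V_n$ to its $\sigma_n$-image, which by definition of $\sigma_n$ is perpendicular to $H$ at the midpoint of $[V_n\sigma_n(V_n)]$; this midpoint is the unique point of $\gamma\cap H$, which must be $C_{n-1}$. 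Since $V_j\in H$ for $j<n$ and $C_n\in\gamma$, the claimed right angles in (ii) follow.

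Parts (iii) and (iv) I would prove together by induction on $n$, the base case $n=1$ of (iii) reducing to $\cosh^2(a/2)=(1+\cosh a)/2$. For the inductive step, apply the inductive hypothesis (iii) to the regular $(n-1)$-simplex $F_n$ of edge length $a$ to obtain $\cosh^2 d(V_j,C_{n-1})=((n-1)\cosh a+1)/n$ for $j<n$. The hyperbolic Pythagorean theorem applied to the right triangle $V_nC_{n-1}V_j$ (right-angled at $C_{n-1}$ by (ii)) gives $\cosh a=\cosh d(V_n,C_{n-1})\cosh d(V_j,C_{n-1})$, and squaring yields (iv). For (iii) at level $n$, the same theorem applied to the right triangle $V_jC_{n-1}C_n$ gives $\cosh d(V_j,C_n)=\cosh d(V_j,C_{n-1})\cosh d(C_{n-1},C_n)$; combining with the collinearity from (i) (so $d(C_{n-1},C_n)=d(V_n,C_{n-1})-d(V_n,C_n)$) and the equidistance $d(V_n,C_n)=d(V_j,C_n)$, and substituting the values already computed, a short manipulation using $\cosh(x-y)=\cosh x\cosh y-\sinh x\sinh y$ and $\sinh^2 x=\cosh^2 x-1$ yields $\cosh^2 d(V_j,C_n)=(n\cosh a+1)/(n+1)$, completing the induction.

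The main obstacle is (i)--(ii): in the Euclidean setting these follow at once from symmetry and the linear description of the barycenter, but in $\mathbb{H}^n$ linearity is unavailable, and one must use the representation-theoretic structure of the $S_n$-action (together with the reflection $\sigma_n$) to identify $\mathrm{Fix}(G)$ as a single geodesic and to show that this geodesic meets $H$ orthogonally. Once (ii) is secured, (iii) and (iv) reduce to routine inductive bookkeeping built on the hyperbolic Pythagorean theorem.
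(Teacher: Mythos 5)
Your proof is essentially correct, but it reaches the result by a genuinely different route than the paper. For parts (i)--(ii) the paper does not argue via the symmetry group at all: it runs an induction in which two points $P,Q$ are placed on the geodesic through $C_{n-1}$ perpendicular to the facet, at distances \emph{prescribed in advance} by the formulas of (iii) and (iv); the hyperbolic Pythagorean theorem shows $P$ is at distance $a$ from all vertices of the facet, so $P=V_n$ by uniqueness of the regular simplex, and a technical identity (Lemma~\ref{Main-technical-lemma}) shows $Q$ is equidistant from all $n+1$ vertices, so $Q=C_n$ by uniqueness of the midpoint. All four assertions then drop out simultaneously. Your isotropy-representation argument identifying $\mathrm{Fix}(G)$ as a single geodesic invariant under $\sigma_n$ is a clean and valid substitute for (ii), and arguably more conceptual; what the paper's guess-and-verify scheme buys in exchange is that $C_n$ is \emph{constructed} between $C_{n-1}$ and $V_n$ (after checking $d(Q,P)<d(C_k,P)$), so (i) never has to be argued separately. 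Your derivation of (iii) from the equidistance equation $\cosh d(V_j,C_n)=\cosh d(V_j,C_{n-1})\cosh\bigl(d(V_n,C_{n-1})-d(V_n,C_n)\bigr)$ does check out (it reproduces $\coth^2 R=(n\cosh a+1)/(n(\cosh a-1))$ and hence $\cosh^2R=(n\cosh a+1)/(n+1)$), and is the same computation as the paper's Lemma~\ref{Main-technical-lemma} run in reverse.

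The one genuine gap is your justification of (i): ``convexity of $\triangle^n_a$ then places $C_n$ on the segment $[V_nC_{n-1}]$.'' Convexity of the simplex gives you $\gamma\cap\triangle^n_a=[V_nC_{n-1}]$, but it does not tell you that $C_n$ lies in $\triangle^n_a$ in the first place: $C_n$ is defined only as the point equidistant from the vertices (a circumcenter), and circumcenters of convex bodies need not lie inside them. You know $C_n\in\gamma$, but a priori it could sit on $\gamma$ beyond $V_n$ or beyond $C_{n-1}$. This is repairable --- e.g.\ parametrize $\gamma$ by arclength with $\gamma(0)=C_{n-1}$, $\gamma(v)=V_n$, note that $\cosh d(\gamma(t),V_j)=\cosh|t|\,\cosh d(C_{n-1},V_j)$ for $j<n$ while $d(\gamma(t),V_n)=|t-v|$, and use $\cosh d(V_n,C_{n-1})\ge\cosh d(V_j,C_{n-1})$ (which follows from your formulas (iii)$_{n-1}$ and (iv)$_n$) to rule out solutions with $t<0$ or $t>v$ --- but as written the step is asserted rather than proved, and since (i) feeds into your derivation of (iii), it needs to be closed. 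Note that this repair uses (iv), so you should reorder: (ii), then (iv), then the location of $C_n$ on $[V_nC_{n-1}]$, then (iii).
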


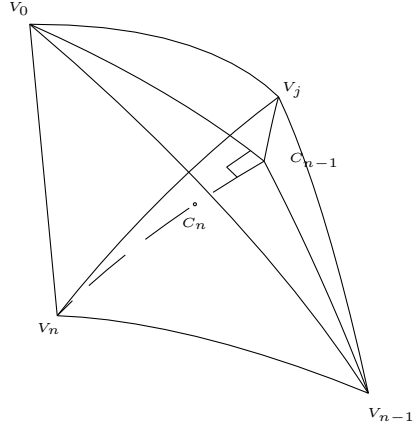
\begin{figure}[H] \centering \begin{tikzpicture}
\draw(-1.06,-2.136)--(-1.421,1.729); \draw(1.852,0.765)--(1.673,0.924)--(1.468
,1.073)--(1.235,1.211)--(0.97,1.337)--(0.673,1.449)--(0.339,1.546)--(-0.034,1.
624)--(-0.45,1.683)--(-0.911,1.718)--(-1.421,1.729)--(-1.421,1.729); \draw(1.8
52,0.765)--(1.566,0.546)--(1.274,0.307)--(0.978,0.049)--(0.678,-0.226)--(0.378
,-0.516)--(0.079,-0.821)--(-0.216,-1.137)--(-0.506,-1.464)--(-0.788,-1.797)--(
-1.06,-2.136); \draw(-1.06,-2.136)--(-0.739,-2.155)--(-0.395,-2.191)--(-0.028,
-2.246)--(0.359,-2.319)--(0.767,-2.411)--(1.193,-2.523)--(1.635,-2.655)--(2.09
,-2.805)--(2.557,-2.975)--(3.032,-3.164); \draw(-1.421,1.729)--(-0.831,1.214)-
-(-0.269,0.697)--(0.264,0.181)--(0.765,-0.331)--(1.232,-0.835)--(1.665,-1.329)
--(2.061,-1.811)--(2.421,-2.279)--(2.744,-2.73)--(3.032,-3.164); \draw(1.852,0
.765)--(1.96,0.524)--(2.072,0.248)--(2.187,-0.062)--(2.306,-0.408)--(2.427,-0.
788)--(2.549,-1.202)--(2.671,-1.649)--(2.793,-2.127)--(2.914,-2.633)--(3.032,-
3.164); \draw(-1.06,-2.136)--(-0.86,-1.938)
(-0.644,-1.738)--(-0.413,-1.535)--(-0.165,-1.331)
(0.099,-1.124)--(0.38,-0.917)--(0.676,-0.709)
(0.989,-0.501)--(1.317,-0.293)--(1.661,-0.087); \draw(1.168,-0.166)--(1.183,-0
.179)--(1.197,-0.193)--(1.211,-0.206)--(1.225,-0.22)--(1.238,-0.233)--(1.252,-
0.246)--(1.266,-0.26)--(1.279,-0.273)--(1.293,-0.286)--(1.307,-0.3); \draw(1.1
68,-0.166)--(1.199,-0.144)--(1.23,-0.122)--(1.261,-0.1)--(1.293,-0.079)--(1.32
4,-0.057)--(1.356,-0.035)--(1.387,-0.013)--(1.419,0.009)--(1.451,0.031)--(1.48
3,0.053); \draw(-1.421,1.729)--(-1.029,1.548)--(-0.657,1.367)--(-0.304,1.185)-
-(0.03,1.002)--(0.345,0.819)--(0.642,0.637)--(0.922,0.455)--(1.185,0.273)--(1.
431,0.093)--(1.661,-0.087)--(1.661,-0.087); \draw(1.852,0.765)--(1.841,0.728)-
-(1.828,0.682)--(1.814,0.626)--(1.798,0.559)--(1.78,0.482)--(1.76,0.393)--(1.7
38,0.292)--(1.715,0.179)--(1.689,0.053)--(1.661,-0.087); \draw(3.032,-3.164)--
(2.912,-2.851)--(2.787,-2.538)--(2.658,-2.224)--(2.524,-1.911)--(2.387,-1.599)
--(2.247,-1.29)--(2.104,-0.983)--(1.958,-0.679)--(1.81,-0.38)--(1.661,-0.087)-
-(1.661,-0.087); {\scalefont{0.5}\pgfputat{\pgfxy(1.993,-0.104)}{\pgfbox[cente
r,base]{$C_{n-1}$}}}
{\scalefont{0.5}\pgfputat{\pgfxy(0.751,-0.958)}{\pgfbox[center,base]{$C_n$}}}
\draw (0.751,-0.658) circle (0.02);
{\scalefont{0.5}\pgfputat{\pgfxy(-1.165,-2.35)}{\pgfbox[center,base]{$V_n$}}}
{\scalefont{0.5}\pgfputat{\pgfxy(-1.563,1.901)}{\pgfbox[center,base]{$V_0$}}}
{\scalefont{0.5}\pgfputat{\pgfxy(2.037,0.842)}{\pgfbox[center,base]{$V_j$}}} {
\scalefont{0.5}\pgfputat{\pgfxy(3.335,-3.48)}{\pgfbox[center,base]{$V_{n-1}$}}
}
\end{tikzpicture}
\caption{$\angle V_nC_{n-1}V_0$ is a right angle.}
\label{fig:right-angle-triangle-in-simplex}
\end{figure}
For the proof of Proposition~\ref{prop-about-distances-midpoint-vertices} we 
shall need the following lemma:

\begin{lemma} \label{Main-technical-lemma}
\label{fractions-identity}
For $n \in {\mathbb N}$ and  $\zeta > 1$, define $\beta,\gamma,\delta \in\mathbb{R}$ by
\begin{align*}
\cosh^2{\beta} &= \frac{n\zeta + 1}{n + 1},\\
\cosh^2{\gamma} &= \frac{\left(n + 1\right)\zeta^2}{n\zeta + 1},\\
\cosh^2{\delta} &= \frac{(n + 1)\zeta + 1}{n + 2}.
\end{align*}
Note that these definitions make sense since the quantities on the right-hand
side are all greater than one. Then the following identity holds: 
\begin{align} \label{main-trig-identity}
\cosh^2{\left(\gamma - \delta\right)}\cosh^2{\beta}=\cosh^2{\delta}.
\end{align}
\end{lemma}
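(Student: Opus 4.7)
The plan is to expand $\cosh(\gamma-\delta)$ via the hyperbolic subtraction formula and reduce identity \eqref{main-trig-identity} to a routine algebraic simplification. First I would compute
$$\sinh^2\beta = \frac{n(\zeta-1)}{n+1}, \qquad \sinh^2\delta = \frac{(n+1)(\zeta-1)}{n+2}, \qquad \sinh^2\gamma = \frac{(n+1)\zeta^2 - n\zeta - 1}{n\zeta + 1},$$
directly from $\sinh^2 x = \cosh^2 x - 1$. The key observation, without which nothing telescopes, is the factorization
$$(n+1)\zeta^2 - n\zeta - 1 = (\zeta - 1)\bigl((n+1)\zeta + 1\bigr),$$
so that $\sinh^2\gamma = (\zeta-1)\bigl((n+1)\zeta+1\bigr)/(n\zeta+1)$.

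Armed with this, both $\cosh\gamma\cosh\delta$ and $\sinh\gamma\sinh\delta$ become the same radical
$$R := \sqrt{\frac{(n+1)\bigl((n+1)\zeta+1\bigr)}{(n\zeta+1)(n+2)}}$$
multiplied respectively by $\zeta$ and $\zeta - 1$ (both positive, since $\zeta > 1$ makes $\gamma,\delta > 0$). The subtraction formula then collapses to $\cosh(\gamma-\delta) = \zeta R - (\zeta-1)R = R$, giving
$$\cosh^2(\gamma-\delta) = \frac{(n+1)\bigl((n+1)\zeta+1\bigr)}{(n\zeta+1)(n+2)}.$$

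It remains to multiply by $\cosh^2\beta = (n\zeta+1)/(n+1)$: the factors $(n+1)$ and $(n\zeta+1)$ cancel and we are left with $\bigl((n+1)\zeta+1\bigr)/(n+2)$, which is exactly $\cosh^2\delta$, establishing \eqref{main-trig-identity}. I do not anticipate any serious obstacle; the entire lemma hinges on spotting the factorization of $(n+1)\zeta^2 - n\zeta - 1$ that makes the cosh--sinh subtraction telescope to a single square root.
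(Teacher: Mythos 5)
Your proposal is correct and follows essentially the same route as the paper: compute $\sinh^2$ of each quantity via $\cosh^2-1$, use the factorization $(n+1)\zeta^2-n\zeta-1=(\zeta-1)\bigl((n+1)\zeta+1\bigr)$, apply the hyperbolic subtraction formula so that $\cosh(\gamma-\delta)$ collapses to a single square root, and multiply by $\cosh^2\beta$. The only cosmetic difference is that the paper squares the subtraction formula and simplifies $\zeta^2+(\zeta-1)^2-2\zeta(\zeta-1)=1$, whereas you extract the common radical $R$ first; these are the same computation.
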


The proof of Lemma~\ref{Main-technical-lemma}  is postponed to the Appendix.

\begin{proof}[{\bf Proof of Proposition~\ref{prop-about-distances-midpoint-vertices}}]
We argue by induction on the dimension $n$. For $n=1$, the simplex is a segment between two points, $V_0$ and $V_1$, distance $a$ apart. 
The point $C_1$ is their midpoint, and $C_0 = V_0$, since it is the midpoint
of a degenerate face containing only one point. It follows immediately that the point 
$C_1$ is on the segment $\left[C_0V_1\right]$,  and that
\begin{align*}
\cosh^2{d\left(V_0, C_1\right)} &= \cosh^2{d\left(V_1, C_1\right)} =
\cosh^2{\frac{a}{2}}= \frac{\cosh{a} + 1}{2},\\
\cosh^2{d\left(V_1, C_0\right)} &= \cosh^2{a}.
\end{align*}
Assume now that the proposition holds for $n=k$. Let $\triangle^{k+1}_a$ be a
regular $(k+1)$-simplex with vertices $\{V_0,\ldots,V_{k+1}\}$, and let
$F_{k+1}$ be its facet given by the convex hull of $\{V_0,\ldots,V_{k}\}$
(which is a regular $k$-simplex with side length $a$). Let $g$ be  the
geodesic line in ${\mathbb H}^{k+1}$ perpendicular to the facet $F_{k+1}$, and
passing through its midpoint $C_k$. The hyperplane in which $F_{k+1}$ lies
divides ${\mathbb H}^{k+1}$ into two half-spaces, and we denote by $\tilde{g}$
the part of $g$ that lies  on the same half-space as $\triangle^{k+1}_a$.

Next, let $P,Q$ be two points on $\tilde{g}$, such that $Q$ is between $C_k$ and $P$ and
\begin{align}
&\cosh^2{d\left(C_k, P\right)} = \frac{\left(k + 1\right)\cosh^2{a}}{k\cosh{a} + 1}\label{dist-ck-p},\\
&\cosh^2{d\left(Q, P\right)} = \frac{\left(k+1\right)\cosh{a} + 1}{k + 2}\label{dist-p-q}.
\end{align}

We remark  that for the above to be well defined the expression for the distance
between $C_k$ and $P$ must be larger than the expression for the distance
between $P$ and $Q$, and indeed, since $\cosh{a} >1$, one has that
\begin{align*}
\frac{\left(k+1\right)\cosh{a} + 1}{k + 2} < \frac{\left(k+1\right)\cosh{a}}{k + 1} = 
\frac{\left(k+1\right)\cosh^2{a}}{k\cosh{a} + \cosh{a}} < \frac{\left(k + 1\right)\cosh^2{a}}{k\cosh{a} + 1},
\end{align*}
and thus 
$d\left(Q, P\right) <d\left(C_k, P\right)$, as
the inverse hyperbolic cosine function is positive and  monotonically increasing.

From the definition of the points $P$ and $Q$ it follows that for every $0 \le j \le k$ one has $\angle P C_k V_j = \angle Q C_k V_j = \frac{\pi}{2}$, and thus using
the hyperbolic law of cosines we conclude that:
\begin{align}
  &\cosh^2{\dist{P}{V_j}} = \cosh^2{\dist{P}{C_k}}\cosh^2{\dist{C_k}{V_j}}\label{first-squared-identity},\\
  &\cosh^2{\dist{Q}{V_j}} = \cosh^2{\left(\dist{P}{C_k} - \dist{P}{Q}\right)}\cosh^2{\dist{C_k}{V_j}}\label{second-squared-identity}.
\end{align}
Using the induction hypothesis and~$(\ref{dist-ck-p})$ above,  equality  $(\ref{first-squared-identity})$ gives
\begin{align*}
  \cosh^2{\dist{P}{V_j}} =& \frac{\left(k + 1\right)\cosh^2{a}}{(k\cosh{a} + 1)} \cdot \frac{(k\cosh{a} + 1)}{(k + 1)} \ =\cosh^2{a}.
\end{align*}
This implies  that the distance from $P$ to each of the  vertices of $F_{k+1}$ equals $a$. Thus, from the uniqueness  (up to isometries) of the regular $(k+1)$-simplex and
our choice of the point $P$ we conclude that $P$ must coincide with $V_{k+1}$.

Next, we simplify the right-hand side of expression $(\ref{second-squared-identity})$ using Lemma~\ref{fractions-identity}, 
by replacing $n$ by $k$, $\zeta$ by $\cosh{a}$, $\beta$ by $\dist{C_k}{V_j}$, $\gamma$ by
$\dist{P}{C_k}$ and $\delta$ by $\dist{P}{Q}$. The lemma's premise holds by
the induction hypothesis and~$(\ref{dist-ck-p})$ and~$(\ref{dist-p-q})$ above, and we conclude that for every $0 \leq j \leq k$ one has
\begin{align} 
  \cosh^2{\dist{Q}{V_j}} = \cosh^2{\dist{Q}{P}} = \cosh^2{\dist{Q}{V_{k+1}}}.\label{q-is-eqidistant}
\end{align}
Thus, the distance between $Q$ and each vertex of $F_{k+1}$ 
equals the distance between $Q$ and $V_{k+1}$. 
Again, from the uniqueness property of the midpoint it follows that $Q$ must coincide with $C_{k+1}$,
and consequently assertion $(i)$ of the proposition holds for $n=k+1$.
Moreover, since the geodesic $g$ connecting $V_{k+1}$ and $C_k$ is perpendicular to the facet
$F_{k+1}$, it follows that $\angle C_{k+1} C_k V_j = \angle V_{k+1} C_k V_j = \frac{\pi}{2}$,
which proves assertion $(ii)$ for $n=k+1$.

Substituting $P=V_{k+1}$ and $Q=C_{k+1}$ in~$(\ref{dist-ck-p})$ and~$(\ref{dist-p-q})$ for $n=k$ yields
\begin{align*}
\cosh^2{d\left(C_{k+1}, V_{k+1}\right)} &= \frac{\left(k+1\right)\cosh{a} + 1}{k + 2},\\
\cosh^2{d\left(C_k, V_{k+1}\right)} &= \frac{\left(k + 1\right)\cosh^2{a}}{k\cosh{a} + 1}.
\end{align*}
The first equality together with~$(\ref{q-is-eqidistant})$ above prove assertion $(iii)$  for $n=k+1$,
and the second  $(iv)$. This completes the proof of  Proposition~\ref{prop-about-distances-midpoint-vertices}.
\end{proof}

We now turn to compute the centroid of the hyperbolic regular simplex.

\begin{proposition} \label{Proposition-centroid-of-simplex}
\label{hyperbolic-regular-simplex-mass}
With the above notations, let $\left(Z_n,z_n\right)$ be the centroid of n+1 point
masses of unit mass placed in the vertices of $\triangle_a^{n}$. Then,
\begin{align}
\label{centroid-of-vertices}
(Z_n,z_n) = \Bigl (C_n, \sqrt{ \left(n + 1\right)\left(n\cosh{a} + 1\right) } \Bigr)
\end{align}
\end{proposition}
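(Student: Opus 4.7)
The plan is to prove the formula by induction on $n$, exploiting the associativity and commutativity of the centroid operator $\Conv$ to reduce the multi-point problem to a two-point combination.

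For the base case $n=1$, the simplex is a segment of length $a$ between two unit masses. Their centroid is the hyperbolic midpoint $C_1$, and a direct application of Definition~\ref{def-centroid} gives mass $2\cosh(a/2) = \sqrt{2(\cosh a + 1)}$, matching~$(\ref{centroid-of-vertices})$.

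For the inductive step, assume the claim holds for $n-1$. The key observation is that, by associativity of $\Conv$, the centroid of $n+1$ unit masses placed at $V_0,\ldots,V_n$ can be computed in two stages: first form the centroid of the $n$ unit masses at the vertices of the facet $F_n$ (which is itself $\triangle_a^{n-1}$), then combine it with $(V_n,1)$. By the induction hypothesis, the first-stage centroid is $(C_{n-1},\sqrt{n((n-1)\cosh a+1)})$. So it suffices to verify
\begin{equation*}
\bigl(C_{n-1},\sqrt{n((n-1)\cosh a+1)}\bigr) \Conv (V_n,1) = \bigl(C_n,\sqrt{(n+1)(n\cosh a+1)}\bigr).
\end{equation*}

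To check this, I would proceed in three steps. First, use Proposition~\ref{prop-about-distances-midpoint-vertices}(i) to note that $C_n$ lies on the segment $[C_{n-1}V_n]$, which is the required collinearity condition from Definition~\ref{def-centroid}. Second, verify the balance equation
\begin{equation*}
\sqrt{n((n-1)\cosh a+1)}\,\sinh d(C_{n-1},C_n) = \sinh d(V_n,C_n),
\end{equation*}
by squaring, substituting the cosh-formulas from Proposition~\ref{prop-about-distances-midpoint-vertices}(iii)--(iv), and using the right-angle identity from part~(ii), which via hyperbolic Pythagoras yields $\cosh d(C_{n-1},C_n)=\cosh d(V_n,C_n)/\cosh d(V_j,C_{n-1})$. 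From these one computes $\sinh^2 d(C_{n-1},C_n)=(\cosh a-1)/[(n+1)((n-1)\cosh a+1)]$ and $\sinh^2 d(V_n,C_n)=n(\cosh a-1)/(n+1)$, and the equation reduces to an algebraic identity. Third, check the mass formula $z_n = \sqrt{n((n-1)\cosh a+1)}\cosh d(C_{n-1},C_n) + \cosh d(V_n,C_n)$; the two summands simplify to $n\sqrt{(n\cosh a+1)/(n+1)}$ and $\sqrt{(n\cosh a+1)/(n+1)}$ respectively, giving $(n+1)\sqrt{(n\cosh a+1)/(n+1)} = \sqrt{(n+1)(n\cosh a+1)}$.

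The main obstacle, if any, is purely bookkeeping: one must be careful to invoke Proposition~\ref{prop-about-distances-midpoint-vertices} at the correct dimension (some quantities refer to the facet $F_n\cong\triangle_a^{n-1}$, others to $\triangle_a^n$), and the right-angle condition from part~(ii) is what makes the trigonometric identities collapse so cleanly -- indeed it is essentially Lemma~\ref{Main-technical-lemma} in disguise. Once these are in hand, the verification is a short explicit calculation.
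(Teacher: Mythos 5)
Your proof is correct, and its overall skeleton -- induction on $n$, decomposing the centroid as (facet centroid) $\Conv$ (opposite vertex, mass $1$) and reading off the mass from Definition~\ref{def-centroid} via the right angle at $C_{n-1}$ -- is the same as the paper's. The one genuine difference is how the \emph{location} $Z_n=C_n$ is identified. The paper does this softly and non-inductively: every permutation of the vertices is induced by an isometry, the centroid commutes with isometries, hence $Z_n$ is equidistant from all vertices and must be the midpoint; no metric computation is needed for the location at all. You instead verify the balance condition $\sqrt{n((n-1)\cosh a+1)}\,\sinh \dist{C_{n-1}}{C_n}=\sinh \dist{V_n}{C_n}$ explicitly from Proposition~\ref{prop-about-distances-midpoint-vertices}(i)--(iv), which works (I checked: $\sinh^2\dist{C_{n-1}}{C_n}=(\cosh a-1)/[(n+1)((n-1)\cosh a+1)]$ and $\sinh^2\dist{V_n}{C_n}=n(\cosh a-1)/(n+1)$ are both right, and uniqueness in Definition~\ref{def-centroid} then pins down the location) but costs an extra computation that the symmetry argument gets for free. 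Conversely, your route has the minor virtue of not needing to exhibit the vertex-permuting isometries. One small point worth making explicit in either version: the induction hypothesis is applied to the facet $F_n$, which lives in a totally geodesic hyperplane isometric to ${\mathbb H}^{n-1}$; since geodesics between points of such a hyperplane stay inside it, the centroid computed in ${\mathbb H}^n$ agrees with the intrinsic one, so the hypothesis really does apply. The mass computation itself matches the paper's line for line.
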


\begin{proof}[{\bf Proof of Proposition~\ref{Proposition-centroid-of-simplex}}]

We start by showing that $Z_n$ coincides with $C_n$, the midpoint of $\triangle^n_a$.
Let $\tau$ be a permutation of 
$\{0,\ldots,n\}$, 
and let $\tilde{\sigma}$ be an isometry of $\mathbb{H}^n$ such that 
$\tilde{\sigma}V_j=V_{\tau j}$ for every $0 \le j \le n$. Note that
\[
(Z_n,z_n) :=  \centroid_{j=0}^{n}{(V_j, 1)} = \centroid_{j=0}^{n}(V_{\tau j}, 1) 
= \centroid_{j=0}^{n}(\tilde{\sigma} V_j, 1) = (\tilde{\sigma} Z_n,z_n).
\]
This implies that the point $Z_n$ lies at the same distance from $V_j$ and $V_{\tau j}$ since
\begin{equation} \label{eq-dist-to-vertices-prop} \dist{Z_n}{V_j}=\dist{\tilde{\sigma} Z_n}{\tilde{\sigma} V_j}=\dist{Z_n}{V_{\tau j}}. \end{equation}
Since~$(\ref{eq-dist-to-vertices-prop})$ holds for any $0 \leq j \leq n$ and  any permutation of the vertices, it follows that $Z_n$ is equidistant from all
the vertices of $\triangle^n_a$, and thus coincides with the midpoint of $\triangle^n_a$.

It remains to show that the masses in both sides of $(\ref{centroid-of-vertices})$ are indeed 
equal.  As before, we argue by induction on the dimension $n$.
For $n=1$ this follows immediatly from Definition~\ref{def-centroid}.
We assume the proposition holds for $n=k$.
From Definition~\ref{def-centroid} it follows that
\begin{align*}
z_{k+1} = \cosh{\dist{C_{k+1}}{V_{k+1}}} + z_k\cosh{\dist{C_{k+1}}{C_k}}.
\end{align*}
By combining this with the induction hypothesis we conclude that
\begin{align*}
z_{k+1} = \cosh{\dist{C_{k+1}}{V_{k+1}}} + 
\sqrt{ \left(k + 1\right)\left(k\cosh{a} + 1\right)}\cosh{\dist{C_{k+1}}{C_k}}.
\end{align*}
From assertion $(ii)$ of Proposition~\ref{hyperbolic-regular-simplex-measurements} it follows that  $\angle C_{k+1}C_kV_0 = \frac{\pi}{2}$, and 
thus using the hyperbolic law of cosines we obtain
\begin{align*}
z_{k+1} = \cosh{\dist{C_{k+1}}{V_{k+1}}} +
\sqrt{ \left(k + 1\right)\left(k\cosh{a} + 1\right)} \cdot
\frac{ \cosh{\dist{C_{k+1}}{V_{0}}}  }{\cosh{\dist{C_k}{V_0}}}.
\end{align*}
Since $ \cosh{\dist{C_{k+1}}{V_{0}}} =  \cosh{\dist{C_{k+1}}{V_{k+1}}}$, we further deduce that
\begin{align*}
z_{k+1} = \cosh{\dist{C_{k+1}}{V_0}}\left(1 + 
\frac{\sqrt{ \left(k + 1\right)\left(k\cosh{a} + 1\right)}}{\cosh{\dist{C_k}{V_0}}}\right).
\end{align*}
Finally, from assertion $(iii)$ and $(iv)$ of Proposition~\ref{hyperbolic-regular-simplex-measurements}  we get
\begin{align*}
z_{k+1} &=  \sqrt{ \frac{\left(k+1\right)\cosh{a} + 1}{k + 2}   }
  \Biggl(1 + \frac{\sqrt{ \left(k + 1\right)\left(k\cosh{a} + 1\right)}}
  {\sqrt{\frac{k\cosh{a} + 1}{k+1}}} \Biggr)\\
&= \sqrt{ \frac{\left(k+1\right)\cosh{a} + 1}{k + 2} }
  \left(1 + k + 1\right)
= \sqrt{ \left(k+2\right)\left(\left(k+1\right)\cosh{a} + 1\right)}.
\end{align*}
This completes the proof of Proposition~\ref{Proposition-centroid-of-simplex}.
\end{proof}
Recall that $\sigma_j$ stands for the reflection in ${\mathbb H}^n$ with respect
to the hyperplane in which the facet $F_j$ of the simplex $\triangle^n_a$ lies. 
The following proposition describes the centroid of a vertex of $\triangle^n_a$ and its reflection 
with respect to the opposite facet in terms of a weigthed center of mass of the other vertices.

\begin{proposition}
\label{corollary-combined-mass-equation}
Let  $\triangle^n_a$ be a hyperbolic regular $n$-simplex with edge length $a>0$ and 
vertices $\left\{V_0,\ldots,V_n\right\}$. 
Then for every  $0 \leq j \leq n$ one has
\begin{align*}
\left(V_j, 1\right)* \left(\sigma_jV_j, 1\right) = \centroid^{n}_{\substack{k=0\\k \neq j}} \biggl(V_k, \frac{2}{n-1 + \frac{1}{\cosh{a}}}\biggr).
\end{align*}
\end{proposition}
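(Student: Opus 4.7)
The plan is to reduce to the case $j=n$ by symmetry (any permutation of the vertices is realized by an isometry of $\mathbb{H}^n$, and the centroid commutes with isometries), then compute both sides of the identity separately as single point-masses and compare. The point on which both centroids will sit is forced to be $C_{n-1}$, the midpoint of the facet $F_n$, so the real content of the proposition is an equality of masses.

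For the left-hand side, I first identify the reflected vertex $\sigma_n V_n$. By assertion $(ii)$ of Proposition~\ref{hyperbolic-regular-simplex-measurements}, the geodesic segment $[V_n C_{n-1}]$ is perpendicular to the hyperplane supporting $F_n$, so $\sigma_n V_n$ lies on the extension of this segment on the opposite side of that hyperplane, with $C_{n-1}$ being the midpoint of $[V_n,\sigma_n V_n]$ and $d(V_n,C_{n-1})=d(\sigma_n V_n,C_{n-1})$. With equal unit weights, Definition~\ref{def-centroid} then places the centroid at $C_{n-1}$ with mass
\[
2\cosh d(V_n,C_{n-1}) \;=\; 2\cosh a\,\sqrt{\tfrac{n}{(n-1)\cosh a + 1}},
\]
where the closed form comes from assertion $(iv)$ of Proposition~\ref{hyperbolic-regular-simplex-measurements}.

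For the right-hand side I use the scaling property of $\Conv$ noted after Definition~\ref{def-centroid}: the centroid of $n$ equal masses $(V_k,\lambda)$ with $k\neq n$ equals the centroid of the unit-mass configuration scaled by $\lambda:=\tfrac{2}{n-1+1/\cosh a}=\tfrac{2\cosh a}{(n-1)\cosh a + 1}$. Applying Proposition~\ref{Proposition-centroid-of-simplex} to the facet $F_n$, which is itself a hyperbolic regular $(n-1)$-simplex of edge length $a$ with midpoint $C_{n-1}$, gives unit-mass centroid $\bigl(C_{n-1},\sqrt{n((n-1)\cosh a+1)}\bigr)$. Therefore the right-hand side is located at $C_{n-1}$ with mass
\[
\lambda\sqrt{n((n-1)\cosh a+1)} \;=\; \frac{2\cosh a}{(n-1)\cosh a+1}\,\sqrt{n((n-1)\cosh a+1)}.
\]

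The final step is a one-line algebraic check that this equals the left-hand mass, which it does after pulling the factor $((n-1)\cosh a+1)$ under the square root. The only potentially subtle point is justifying that the centroid of $(V_n,1)$ and $(\sigma_n V_n,1)$ is located exactly at $C_{n-1}$; this follows because the reflection $\sigma_n$ is an isometry that swaps the two point-masses while fixing $C_{n-1}$, so the centroid $(Z,z)$ must satisfy $\sigma_n Z=Z$ and lie on $[V_n,\sigma_n V_n]$, forcing $Z=C_{n-1}$. Everything else is a direct appeal to Propositions~\ref{hyperbolic-regular-simplex-measurements} and~\ref{Proposition-centroid-of-simplex} together with the scaling property of $\Conv$.
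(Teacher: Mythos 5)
Your proposal is correct and follows essentially the same route as the paper: both sides are identified as point masses located at the midpoint of the opposite facet (your $C_{n-1}$, the paper's $W_j$), with the left-hand mass computed from assertion $(iv)$ of Proposition~\ref{prop-about-distances-midpoint-vertices} and the right-hand mass from Proposition~\ref{Proposition-centroid-of-simplex} applied to the facet together with the scaling property of $\Conv$. The only cosmetic difference is your initial reduction to $j=n$ by symmetry, which the paper avoids by simply fixing $j$ and arguing directly.
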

\begin{proof}[{\bf Proof of Proposition~\ref{corollary-combined-mass-equation}}]
Fix  $0 \leq j \leq n$. Since the facet $F_j$ is an $(n-1)$-regular simplex with edge length $a$, it follows from Proposition~\ref{hyperbolic-regular-simplex-mass} that
$$\centroid^{n}_{\substack{k=0\\k \neq j}} \left(V_k, 1 \right) = \Bigl( W_j, \sqrt{ n\left(\left(n-1\right)\cosh{a} + 1\right)} \Bigr),$$
where $W_j$ is the midpoint of  $F_j$. From Definition~$(\ref{linearity-of-centroid})$ it follows that one can 
multiply all the masses in the above equation by the constant $ \frac{2}{n-1 + \frac{1}{\cosh{a}}}$, and hence:
\begin{align} \label{centroid-reflection-rhs}
\centroid^{n}_{\substack{k=0\\k \neq j}} \left(V_k,\frac{2}{n-1 + \frac{1}{\cosh{a}}}\right) = \left(W_j, 2 \sqrt{ \frac{n\cosh^2{a}}{\left(n-1\right)\cosh{a} + 1}}\right)
\end{align}

On the other hand, the segment $[V_j, \sigma_j V_j]$ is invariant under
$\sigma_j$ and not fully contained by $F_j$, so must be perpendicular to $F_j$.
Assertion $(ii)$ of Proposition~\ref{hyperbolic-regular-simplex-measurements}, 
implies that this segment  must pass through $W_j$. Moreover, $\sigma_j$ is an
isometry that  leaves $W_j \in F_j$ invariant so $\dist{V_j}{W_j}=\dist{\sigma_jV_j}{W_j}$.
The point $W_j$ is therefore the midpoint between $V_j$ and $\sigma_j V_j$.

From Definition~\ref{def-centroid} it follows that
\begin{align} \label{centroid-of-reflection1}
\left(V_j, 1\right)* \left(\sigma_j V_j, 1\right) &= \bigl (W_j, \cosh{\dist{W_j}{V_j}}+\cosh{\dist{W_j}{\sigma_j V_j}}\bigr).
\end{align}
From assertion $(iii)$ of Proposition~\ref{hyperbolic-regular-simplex-measurements} it follows that both
$\cosh^2{\dist{W_j}{V_j}}$ and $\cosh^2{\dist{W_j}{\sigma_j V_j}}$ equal
$\frac{n\cosh^2{a}}{\left(n-1\right)\cosh{a} + 1}$, since $\dist{W_j}{V_j}$ and $\dist{W_j}{\sigma_j V_j}$ 
are distances between a vertex of a regular $n$-simplex and the midpoint of the facet in front of it.
Hence~$(\ref{centroid-of-reflection1}$) becomes
\begin{align} \label{centroid-of-reflection2}
\left(V_j, 1\right)* \left(\sigma_jV_j, 1\right) = \Biggl( W_j, 2 \sqrt{ \frac{n\cosh^2{a}}{\left(n-1\right)\cosh{a} + 1}}\Biggr).
\end{align}
The proof of the proposition now follows from~$(\ref{centroid-reflection-rhs}$)  and~$(\ref{centroid-of-reflection2}$).
\end{proof}

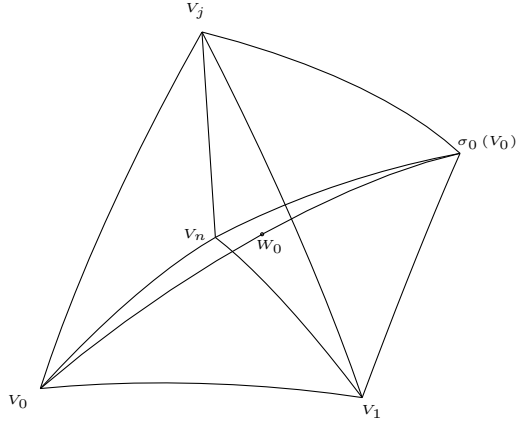
\begin{figure}[H]
\centering
\begin{tikzpicture}
\draw(2.602,1.073)--(2.264,1.004)--(1.923,0.924)--(1.583,0.833)--(1.246,0.732)
--(0.913,0.622)--(0.586,0.503)--(0.269,0.376)--(-0.039,0.242)--(-0.334,0.103)-
-(-0.616,-0.042); \draw(2.602,1.073)--(2.396,1.251)--(2.163,1.428)--(1.902,1.6
02)--(1.611,1.773)--(1.291,1.94)--(0.939,2.102)--(0.555,2.258)--(0.139,2.408)-
-(-0.31,2.55)--(-0.792,2.682); \draw(2.602,1.073)--(2.502,0.845)--(2.395,0.593
)--(2.28,0.317)--(2.158,0.018)--(2.029,-0.303)--(1.894,-0.644)--(1.754,-1.003)
--(1.611,-1.378)--(1.464,-1.766)--(1.315,-2.166); \draw(2.602,1.073)--(2.161,0
.956)--(1.674,0.789)--(1.147,0.573)--(0.586,0.309)--(0,0)--(-0.601,-0.352)--(-
1.205,-0.74)--(-1.8,-1.157)--(-2.375,-1.595)--(-2.918,-2.046);
\draw(-0.616,-0.042)--(-0.792,2.682); \draw(1.315,-2.166)--(1.17,-1.753)--(1.0
09,-1.32)--(0.833,-0.866)--(0.641,-0.395)--(0.434,0.092)--(0.213,0.593)--(-0.0
21,1.106)--(-0.268,1.627)--(-0.525,2.153)--(-0.792,2.682)--(-0.792,2.682);
\draw(1.315,-2.166)--(1.088,-1.868)--(0.866,-1.587)--(0.65,-1.324)--(0.441,-1.08)
--(0.241,-0.856)--(0.049,-0.652)--(-0.133,-0.469)--(-0.304,-0.306)--(-0.465,-0
.164)--(-0.616,-0.042); \draw(-0.616,-0.042)--(-0.802,-0.155)--(-1,-0.288)--(-
1.209,-0.44)--(-1.429,-0.613)--(-1.659,-0.805)--(-1.898,-1.016)--(-2.144,-1.24
7)--(-2.397,-1.496)--(-2.656,-1.763)--(-2.918,-2.046); \draw(-0.792,2.682)--(-
1.076,2.166)--(-1.346,1.653)--(-1.602,1.146)--(-1.842,0.646)--(-2.066,0.157)--
(-2.273,-0.318)--(-2.461,-0.778)--(-2.632,-1.22)--(-2.784,-1.643)--(-2.918,-2.
046); \draw(1.315,-2.166)--(0.905,-2.109)--(0.483,-2.061)--(0.053,-2.023)--(-0
.383,-1.995)--(-0.82,-1.978)--(-1.255,-1.971)--(-1.686,-1.974)--(-2.109,-1.988
)--(-2.52,-2.012)--(-2.918,-2.046); \draw (-0.001,0) circle (0.02);
{\scalefont{0.5}\pgfputat{\pgfxy(0.099,-0.2)}{\pgfbox[center,base]{$W_0$}}}
{\scalefont{0.5}\pgfputat{\pgfxy(2.962,1.181)}{\pgfbox[center,base]{$\sigma_0\left(V_0\right)$}}}
{\scalefont{0.5}\pgfputat{\pgfxy(-0.877,-0.046)}{\pgfbox[center,base]{$V_n$}}}
{\scalefont{0.5}\pgfputat{\pgfxy(-0.871,2.95)}{\pgfbox[center,base]{$V_j$}}}
{\scalefont{0.5}\pgfputat{\pgfxy(1.447,-2.382)}{\pgfbox[center,base]{$V_1$}}}
{\scalefont{0.5}\pgfputat{\pgfxy(-3.209,-2.251)}{\pgfbox[center,base]{$V_0$}}}
\end{tikzpicture}
\caption{The center of mass of a vertex and its reflection is positioned 
 in the center of mass of the rest of the vertices}
\label{fig:middle-fo-point-and-its-reflection}
\end{figure}

We finish this subsection with the following simple observation:

\begin{lemma} \label{tech-lemma}
\label{exterior-point-iff-zero-mass}
Let $\triangle_{a}^n$ be a regular $n$-simplex with vertices $\left\{V_0,\ldots,V_n\right\}$,
and $\left\{w_0,\ldots,w_n\right\}$ be non-negative real numbers. Let $P=\centroid_{j=0}^n\left(V_j,w_j\right)$
be the centroid of point masses placed in the vertices $\left\{V_0,\ldots,V_n\right\}$.
Then,
 $$P \in F_k \Leftrightarrow w_k=0.$$
\end{lemma}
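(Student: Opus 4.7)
The plan is to reduce both directions of the biconditional to one auxiliary fact: \emph{the centroid of any finite family of non-negative point masses in $\mathbb{H}^n$ lies in the (hyperbolic) convex hull of the supporting points}. I would prove this by induction on the number of masses. The base case is immediate, and for the inductive step associativity of $\Conv$ lets us write the centroid as $(V, w) \Conv (Z', z')$, where $Z'$ already lies in the convex hull by the inductive hypothesis; Definition~\ref{def-centroid} then places the combined centroid on the segment $[VZ']$, which remains in the convex hull by convexity.

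For the direction $(\Leftarrow)$, assuming $w_k = 0$ I would invoke the identity $(V_k, 0) \Conv (Y, y) = (Y, y)$, together with associativity and commutativity of $\Conv$, to delete the $k$-th term from $P = \centroid_{j=0}^{n}(V_j, w_j)$. What remains is the centroid of the masses at $\{V_j : j \ne k\}$, and by the auxiliary fact this point lies in the convex hull of those vertices, which is exactly $F_k$.

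For the direction $(\Rightarrow)$, I would argue contrapositively: suppose $w_k > 0$. By associativity I would write $P = (V_k, w_k) \Conv (Q, q)$, where $(Q, q) := \centroid_{j \ne k}(V_j, w_j)$. If all remaining weights vanish, then $P = V_k \notin F_k$ and we are done. Otherwise $q > 0$ and the auxiliary fact gives $Q \in F_k$, so $P$ lies on the geodesic segment $[V_k Q]$. Letting $H_k$ denote the hyperplane containing $F_k$, the crucial observation is that $[V_k Q]$ meets $H_k$ only at $Q$. Hence $P \in F_k \subset H_k$ forces $P = Q$, and then the defining identity $w_k \sinh d(V_k, P) = q \sinh d(Q, P) = 0$ forces $w_k = 0$ (since $V_k \ne Q$), contradicting $w_k > 0$.

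The one genuinely non-routine ingredient is the transversality claim that the segment $[V_k Q]$ intersects $H_k$ only at $Q$. This should follow from the totally geodesic nature of $H_k$: any second intersection point would, by uniqueness of geodesics in $\mathbb{H}^n$, force the entire geodesic line through $V_k$ and $Q$ to lie in $H_k$, contradicting $V_k \notin H_k$ (which holds because the simplex is non-degenerate, so the vertex opposite $F_k$ cannot lie on the hyperplane of $F_k$). Beyond this, the whole argument is routine bookkeeping with Definition~\ref{def-centroid}.
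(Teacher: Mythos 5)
Your argument is correct and follows essentially the same route as the paper: both decompose $P=(V_k,w_k)\Conv(Q,q)$ with $Q=\centroid_{j\neq k}(V_j,w_j)\in F_k$, observe that the geodesic through $V_k$ and $Q$ meets the hyperplane of $F_k$ only at $Q$, and conclude $P\in F_k\Leftrightarrow P=Q\Leftrightarrow w_k=0$. You merely make explicit two points the paper leaves implicit (that the centroid of masses supported on $F_k$ lies in $F_k$, and the degenerate case where all the remaining weights vanish), which is fine.
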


\begin{proof}[{\bf Proof of Lemma~\ref{tech-lemma}}]
The commutativity and associativity  of the $\Conv$ operator allow one to  write
\begin{align*}
P=\left(V_k, w_k\right)* \left(Q,m\right),
\end{align*}
for some $m\ge0$, where
\begin{align*}
Q:=\centroid_{\substack{j=0\\j\neq k}}^{n}\left(V_j,w_j\right)\in F_k
\end{align*}
Moreover, $V_k \not\in F_k$, and the geodesic through $V_k$ and $Q$ intersects  $F_k$
at one point at most, namely $Q$.
Since $P$ lies on the above mentioned geodesic connecting $V_k$ and $Q$, 
$$P \in F_k \Leftrightarrow P \mbox{ coincides with } Q \Leftrightarrow w_k=0.$$
\end{proof}
\subsection{Billiards in regular hyperbolic simplices}
Billiard dynamics in hyperbolic space, and in particular polygonal and
polyhedral billiards, have been extensively studied both in the context of
mathematics and physics  (see e.g.,~\cite{DHN,Min,Ves}). As in the Euclidean
case, when defining billiard dynamics in the hyperbolic space for a non-smooth
domain, one  runs into certain technical difficulties in describing the dynamics
at the singular parts of the boundary. To avoid these difficulties, in what
follows we shall consider only closed billiard orbits in $\triangle^n_a \subset
{\mathbb H}^n$ which bounce at the interior of the facets. More precisely,

\begin{definition} \label{def-billiard-traj}
A closed billiard orbit of period $n+1$ inside
$\triangle^n_a \subset {\mathbb H}^n$
is a closed polygonal curve consisting of geodesic segments, and specified by
a sequence of points  $\left\{P_j\right\}_{j=0}^n \in \partial \triangle^n_a$
such that:
\begin{itemize}
  \item[(i)] For every $0 \le j \le n$, there is a unique $0 \le k \le n$ such that  $P_j \in {F}_k$, \label{bouncing-point-on-unique-facet}
  \item[(ii)] $P_j \in [P_{j-1}, \sigma_j \left( P_{j+1} \right)]$, where $\sigma_j$ is the reflection in ${\mathbb H}^n$ with respect
to the  hyperplane in which the facet $F_j$ lies.  \label{reflection-property}
\end{itemize}
\end{definition}

\noindent{\bf Remark:} {\rm It is not hard to check that the above definition
is equivalent to the definition of billiard trajectories as critical points of
the length functional, where trajectories  passing through non-smooth parts of
the boundary of $\triangle^n_a$ are excluded (see e.g.~\cite{KT}).}

\section{Proof of the Main Theorem} \label{section-proof-main-theorem}

Let $\triangle^n_a$ be a regular simplex in the hyperbolic space ${\mathbb
H}^n$ with side length $a$.  We turn now to the construction of a billiard
trajectory inside $\triangle^n_a$ which bounces at the interior of any facet
exactly once.  The (bouncing) points of this orbit will be described as
locations of centroids of masses positioned at the vertices of the simplex
$\triangle^n_a$.  For this end, let us first define a finite sequence of real
non-negative numbers that will serve as a pool from which these masses will be
drawen.

\begin{lemma} \label{mass-sequence-existance}
For every $2<n\in\mathbb{N}$, and $0<a\in\mathbb{R}$, there exist
$0<\lambda_{n,a}\in\mathbb{R}$ and a sequence of $n + 2$ real numbers
$\alpha_0,\ldots,\alpha_{n+1}$, such that 
\begin{equation} \label{positivity-of-the-seq}
 \alpha_0 = \alpha_{n+1} = 0, \
\alpha_1 = \alpha_n = 1, \    \alpha_j > 0 \  {\rm for \  every \ }  1 < j <
n, \end{equation} and for every $1 \le j \le n$ one has
\begin{align}
\lambda\alpha_{j} = \alpha_{j-1} + \alpha_{j+1} +  \frac{2}{n-1 + \frac{1}{\cosh{a}}}\label{weight-equation}
\end{align}
\end{lemma}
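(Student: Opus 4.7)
The recurrence~(\ref{weight-equation}) is an inhomogeneous linear second-order difference equation with constant coefficients, and my plan is to solve it in closed form. Writing $c := 2/(n-1 + 1/\cosh a)$ and rearranging as $\alpha_{j+1} - \lambda\alpha_j + \alpha_{j-1} = -c$, one immediately spots the constant particular solution $\alpha_j \equiv \mu := c/(\lambda - 2)$. I would search for the desired $\lambda$ in the regime $\lambda > 2$, parameterized as $\lambda = 2\cosh\phi$ with $\phi > 0$; the homogeneous solutions are then $\cosh(j\phi)$ and $\sinh(j\phi)$, and $\mu = c/(4\sinh^2(\phi/2)) > 0$.

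Imposing the two prescribed endpoint conditions $\alpha_0 = \alpha_{n+1} = 0$ pins down the two integration constants. After a short calculation using the identity $\cosh A - \cosh B = 2\sinh\!\bigl(\tfrac{A+B}{2}\bigr)\sinh\!\bigl(\tfrac{A-B}{2}\bigr)$, the solution takes the manifestly symmetric closed form
\begin{equation*}
\alpha_j \;=\; \frac{2\mu\,\sinh(j\phi/2)\,\sinh((n+1-j)\phi/2)}{\cosh((n+1)\phi/2)}, \qquad 0 \le j \le n+1.
\end{equation*}
From this formula the positivity $\alpha_j > 0$ for $1 \le j \le n$ and the symmetry $\alpha_j = \alpha_{n+1-j}$ (which in particular forces $\alpha_n = \alpha_1$) are immediate, and the recurrence is easily verified using $2\sinh A\sinh B = \cosh(A+B) - \cosh(A-B)$.

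It remains to choose $\phi$ so that $\alpha_1 = 1$. Substituting $j=1$ and simplifying via $2\sinh A\cosh B = \sinh(A+B) + \sinh(A-B)$, this normalization reduces to the single transcendental equation
\begin{equation*}
g(\phi) \;:=\; \frac{\sinh\!\left((n+2)\phi/2\right)}{\sinh(n\phi/2)} \;=\; c+1.
\end{equation*}
A Taylor expansion gives $g(\phi) \to (n+2)/n$ as $\phi \to 0^+$, while $g(\phi) \sim e^\phi \to \infty$ as $\phi \to \infty$. Since $1/\cosh a < 1$, we have $c > 2/n$ and hence $c+1 > (n+2)/n$; by continuity of $g$ on $(0,\infty)$, the intermediate value theorem produces $\phi_{n,a} > 0$ with $g(\phi_{n,a}) = c+1$. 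Setting $\lambda_{n,a} := 2\cosh\phi_{n,a}$ completes the construction.

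The only step that requires genuine care is the telescoping of the solution into a form in which $\alpha_1 = 1$ becomes amenable to elementary analysis; everything else is bookkeeping once the hyperbolic product-to-sum identities are in play. The elementary estimate $1/\cosh a < 1$ is precisely what is needed to place $c+1$ strictly above $\inf_{\phi>0} g(\phi) = (n+2)/n$, so I do not expect a substantial obstacle.
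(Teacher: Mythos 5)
Your proof is correct, and it reaches the \emph{same} explicit sequence as the paper (your $\phi$ corresponds to the paper's $\xi=e^{\phi}$ and $\lambda=2y_0=2\cosh\phi$, and your product formula is exactly the paper's $\alpha_j$ rewritten via $\cosh A-\cosh B=2\sinh\frac{A+B}{2}\sinh\frac{A-B}{2}$), but the route is genuinely different and, in my view, cleaner. The paper \emph{posits} the formula for $\alpha_j$ and then verifies the boundary values, the recurrence, and positivity one by one; positivity in particular requires a separate monotonicity argument in $\left|j-\frac{n+1}{2}\right|$, and the existence of $\lambda$ is obtained by applying the intermediate value theorem to a rather opaque auxiliary function $g(y)=h\bigl(y+\sqrt{y^2-1}\bigr)-1+(y-1)\bigl(n-1+\tfrac{1}{\cosh a}\bigr)$, whose sign change must be extracted from the computation $g(1)=0$, $g'(1)=\tfrac{1}{\cosh a}-1<0$, together with $g(y)\to\infty$. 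You instead \emph{derive} the formula by solving the inhomogeneous linear recurrence subject to $\alpha_0=\alpha_{n+1}=0$, which makes positivity and the symmetry $\alpha_j=\alpha_{n+1-j}$ (hence $\alpha_n=\alpha_1$) immediate from the product form, and reduces the normalization $\alpha_1=1$ to the transparent equation $\sinh\!\bigl(\tfrac{(n+2)\phi}{2}\bigr)/\sinh\!\bigl(\tfrac{n\phi}{2}\bigr)=c+1$, settled by the limits $\tfrac{n+2}{n}$ at $0^+$ and $\infty$ at $\infty$ together with the same elementary inequality $\tfrac{1}{\cosh a}<1$ that drives the paper's sign computation. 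The one cosmetic remark: for the intermediate value theorem you only need the two limiting values of your ratio, not the (true, but unproved) claim that $\tfrac{n+2}{n}$ is its infimum; either state only the limits or note that the ratio is increasing in $\phi$. Your approach buys a conceptual explanation of where the paper's formula comes from; the paper's buys nothing extra here, so I would keep yours.
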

The proof of Lemma~\ref{mass-sequence-existance} is postponed to the Appendix.

In what follows, to ease notations, let us extend the vertices $\{V_j\}_{j=1}^n$ to all $j\in\mathbb{Z}$ by
cyclicly repeating  them in both directions, thus creating an $(n+1)$-periodic
sequence $\left\{V_j\right\}_{j \in \mathbb{Z}}$ such that $V_j=V_k$
if $j \equiv k \mod{n+1}$.

With Lemma~\ref{mass-sequence-existance} at our disposal, we now
define the bouncing points of the  billiard 
trajectory as locations of centroids of point masses placed in the vertices of the simplex
of $\triangle^n_a$:

\begin{definition}
\label{billiard-definition}
With the above notations, for every $0 \leq j \leq n$, let
\begin{align} \label{def-of-bouncing-points}
\left(P_j,m_j\right) &:= \centroid^{n}_{k=0} \left(V_{k+j}, \alpha_k\right),
\end{align}
where $\{ \alpha_k\}_{k=0}^{n}$ is a sequence which satisfies properties~$(\ref{positivity-of-the-seq})$  and~$(\ref{weight-equation})$, 
whose existence is ensured by Lemma~\ref{mass-sequence-existance} above.
\end{definition}

Finally, we are now in position to prove our main result.
\begin{proof}[{\bf Proof of Theorem~\ref{main-thm}}]
We will show that the sequence of points $\{P_j\}_{j=0}^n$, defined in equation~$(\ref{def-of-bouncing-points})$ form a periodic billiard trajectory in $\triangle^n_a$.
%
%

First, note that in the definition of the point $P_j$, the mass positioned in the vertex $V_j$ is
$\alpha_0=0$. Thus, from Lemma~\ref{exterior-point-iff-zero-mass} it follows that $P_j$ belongs 
to the facet $F_j$. On the other hand, the rest of the masses that appear in the definition of the point $P_j$ are strictly positive.
Hence, applying Lemma~\ref{exterior-point-iff-zero-mass}  once again, this time to the facet $F_j$ 
(considered as regular $(n-1)$-simplex), we obtain that the point $P_j$  does not belong to any other facet of the simplex, as required
by property $(i)$ of Definition~\ref{def-billiard-traj}.

It remains to show that the sequence $\{P_j\}_{j=0}^n$ satisfies property $(ii)$ of 
Definition~\ref{def-billiard-traj}, i.e., that $P_j\in[P_{j-1}, \sigma_j P_{j+1}]$, for every $0 \leq j \leq n$
(see Figure \ref{fig:strategy-of-main-thm-proof} below).
In fact we will show, by means of center of mass arguments, that
\begin{align} 
(P_j, \lambda m_j) = \left(P_{j-1},m_{j-1}\right) \centroid \left(\sigma_j P_{j+1},m_{j+1}\right), \label{bouncing-point-center-of-mass-condition}
\end{align}
where $\lambda>0$ is the positive constant ensured by Lemma~\ref{mass-sequence-existance}, and thus 
$P_j\in[P_{j-1}, \sigma_j P_{j+1}]$ as required.

\begin{figure}[H]
\centering
\begin{tikzpicture}[x=7cm,y=7cm]
\draw [thick] (0,1.349) -- (0, 0.74);
\draw [thick] (0,0.74) arc (122.849:90.435:0.881);
\draw [thick] (0,1.349) arc (57.15:33.284:1.606);
\draw (-0.472,0.881) arc (146.715:122.849:1.606);
\draw (-0.472,0.881) arc (89.564:57.15:0.881);
\draw [thin,gray,-latex] (0,1) arc (119.655:104.292:1.15);
\draw [thin,gray] (-0.286,1.115) arc (75.707:47.007:1.15);
{\scalefont{0.7}\pgfputat{\pgfxy(0,0.7)}{\pgfbox[center,base]{$V_2$}}};
{\scalefont{0.7}\pgfputat{\pgfxy(0,1.37)}{\pgfbox[center,base]{$V_0$}}};
{\scalefont{0.7}\pgfputat{\pgfxy(0.491,0.881)}{\pgfbox[center,base]{$V_1$}}};
{\scalefont{0.7}\pgfputat{\pgfxy(-0.52,0.881)}{\pgfbox[center,base]{$\sigma_1 V_1$}}};
{\scalefont{0.7}\pgfputat{\pgfxy(-0.03,0.97)}{\pgfbox[center,base]{$P_1$}}};
{\scalefont{0.7}\pgfputat{\pgfxy(0.255,0.811)}{\pgfbox[center,base]{$P_0$}}};
{\scalefont{0.7}\pgfputat{\pgfxy(0.32,1.115)}{\pgfbox[center,base]{$P_2$}}};
{\scalefont{0.7}\pgfputat{\pgfxy(-0.35,1.115)}{\pgfbox[center,base]{$\sigma_1 P_2$}}};
\end{tikzpicture}
\caption{Property $(ii)$ of Definition~\ref{def-billiard-traj} for $n=2,j=1$: $P_1\in[P_0, \sigma_1 P_2]$}
\label{fig:strategy-of-main-thm-proof}
\end{figure}

To this end, we start with the following computation. Let $0 \le j \le n$.
From the properties of the center of mass (Definition~\ref{def-centroid}),
the choice of the sequence $\{\alpha_k\}_{k=0}^{n+1}$
(Definition~\ref{mass-sequence-existance}), and the definition of the
bouncing points $\{P_j\}_{j=0}^n$ (Definition~\ref{billiard-definition}) 
it follows that
\begin{align*}
\left(\sigma_j\left(P_{j+1}\right), m_{j+1}\right)
  &= \centroid^{n}_{k=0} \left(\sigma_j\left(V_{k+j+1}\right), \alpha_k\right) \\
  &= \centroid^{n-1}_{k=0} \left(\sigma_j\left(V_{k+j+1}\right), \alpha_k\right) \Conv \left(\sigma_j \left(V_{j+n+1}\right), \alpha_n\right) \\
  &= \centroid^{n-1}_{k=0} \left(\sigma_j\left(V_{k+j+1}\right), \alpha_k\right) \Conv \left(\sigma_j \left(V_{j+n+1}\right), 1\right) \\
  &= \centroid^{n-1}_{k=0} \left(\sigma_j\left(V_{k+j+1}\right), \alpha_k\right) \Conv \left(\sigma_j V_j, 1\right)\\
  &= \centroid^{n}_{k=1} \left(\sigma_j\left(V_{k+j}\right), \alpha_{k-1}\right) \Conv \left(\sigma_j V_j, 1\right)\\
  &= \centroid^{n}_{k=1} \left(V_{k+j}, \alpha_{k-1}\right) \Conv \left(\sigma_j V_j, 1\right),
\end{align*}
where the last equality holds since $V_{k+j}$ does not equal $V_{j}$ for $1 \le k \le n$, 
and is therfore a vertex of $F_j$ which is invariant under the reflection $\sigma_j$.

On the other hand, 
\begin{align*}
\left(P_{j-1}, m_{j-1}\right)
  &= \centroid^{n}_{k=0} \left(V_{k+j-1}, \alpha_k\right) \\
  &= \centroid^{n}_{k=2} \left(V_{k+j-1}, \alpha_k\right)  \Conv \left(V_{j-1}, \alpha_0\right) 
  * \left(V_j, \alpha_1\right)\\  
  &= \centroid^{n}_{k=2} \left(V_{k+j-1}, \alpha_k\right)  \Conv \left(V_{n+j}, \alpha_{n+1}\right) 
  * \left(V_j, \alpha_1\right)\\
  &= \centroid^{n+1}_{k=2} \left(V_{k+j-1}, \alpha_k\right)  \Conv \left(V_j, \alpha_1\right)\\
  &= \centroid^{n}_{k=1} \left(V_{k+j}, \alpha_{k+1}\right)  \Conv \left(V_j, 1\right).
\end{align*}
Using once again the properties of the centroid, a direct calculation of the center of mass of the two point masses above yields
that the right-hand side of relation~$(\ref{bouncing-point-center-of-mass-condition})$ equals
\begin{align*}
   &  \Bigl( \centroid^{n}_{k=1} \left(V_{k+j}, \alpha_{k-1}\right)\Bigl) \Conv \left(\sigma_j V_j, 1\right)
      \Conv 
      \Bigl(\centroid^{n}_{k=1} \left(V_{k+j}, \alpha_{k+1}\right)\Bigl) \Conv \left( V_j, 1\right)=\\
   & \Bigl(\centroid^{n}_{k=1} \left(V_{k+j}, \alpha_{k - 1}  + \alpha_{k + 1} \right)\Bigl) \Conv \left(V_j, 1\right)\Conv \left(\sigma_jV_j, 1\right).
\end{align*}
Using Proposition~\ref{corollary-combined-mass-equation} above, one can replace $\left(V_j, 1\right)* \left(\sigma_jV_j, 1\right)$
by an expression free of the reflection $\sigma_j$ and thus obtain:
\begin{align*}
&\left(\sigma_jP_{j+1}, m_{j+1}\right) \Conv \left(P_{j-1}, m_{j-1}\right)\\
    &=\Bigl(\centroid^{n}_{k=1} \left(V_{k+j},  \alpha_{k - 1}  + \alpha_{k + 1} \right) \Bigl)\Conv 
      \left(\centroid^{n}_{k=1} \left(V_{k+j}, \frac{2}{n-1 + \frac{1}{\cosh{a}}}\right)\right)\\
    &=\centroid^{n}_{k=1}\left(V_{k+j},  \alpha_{k - 1}  + \alpha_{k + 1} +
     \frac{2}{n-1 + \frac{1}{\cosh{a}}}\right).
\end{align*}
Since equation~$(\ref{weight-equation})$ is satisfied for every $1 \le k \le n$, we conclude that: 
$$
   \left(\sigma_jP_{j+1}, m_{j+1}\right) * \left(P_{j-1}, m_{j-1}\right) = \centroid^{n}_{k=1} \left(V_{k+j}, \lambda\alpha_{k}\right).
$$

The expression on the right-hand side is very similar to the definition of $\left(P_0, \lambda m_0\right)$,
the only difference being in the range of $k$'s -- it does not include $0$. However, 
since $0 = \alpha_0 = \lambda\alpha_0$, extending the range of $k$ to include $0$ has no effect on the
value of the expression and hence: 
\begin{align*}
   \left(\sigma_jP_{j+1}, m_{j+1}\right) * \left(P_{j-1}, m_{j-1}\right)= \centroid^{n}_{k=0} \left(V_{k+j}, \lambda\alpha_{k}\right)= \left(P_0, \lambda m_0\right)
\end{align*}
This completes the proof of Theorem~\ref{main-thm}.
\end{proof}
 
\noindent {\bf APPENDIX}

\begin{proof}[{\bf Proof of Lemma~\ref{Main-technical-lemma}}]
A direct computation shows that
\begin{align*}
&\sinh^2{\gamma} = \cosh^2{\gamma}-1=\frac{\left(n + 1\right)\zeta^2}{n\zeta + 1}-1 =
\frac{\left(n\zeta + \zeta + 1\right)\left(\zeta-1\right)}{n\zeta + 1},\\
&\sinh^2{\delta} = \cosh^2{\delta}-1=\frac{\left(n+1\right)\zeta + 1}{n + 2}-1 =
\frac{\left(n+1\right)\left(\zeta - 1\right)}{n + 2}.
\end{align*}
Combining this with a well known hyperbolic trigonometric identity gives 
\begin{align*}
&\cosh^2{\left(\gamma - \delta\right)}=\Bigl ( \cosh(\gamma)\cosh(\delta)-\sinh(\gamma)\sinh(\delta)  \Bigr)^2 \\ 
&=\cosh^2{\gamma}\cosh^2{\delta}+\sinh^2{\gamma}\sinh^2{\delta} -2\sqrt{\cosh^2{\gamma}\cosh^2{\delta}\sinh^2{\gamma}\sinh^2{\delta}}\\ 
&=\frac{\left(n + 1\right)\zeta^2}{(n\zeta + 1)}\cdot \frac{(n\zeta + \zeta + 1)}{(n + 2)} +  \frac{\left(n\zeta + \zeta + 1\right)\left(\zeta-1\right)}{(n\zeta + 1)} \cdot  \frac{\left(n+1\right)\left(\zeta - 1\right)}{(n + 2)}  \\
& \ \ \ -2\sqrt{\frac{\left(n + 1\right)\zeta^2}{(n\zeta + 1)}\cdot \frac{(n\zeta + \zeta + 1)}{(n + 2)} \cdot  \frac{\left(n\zeta + \zeta + 1\right)\left(\zeta-1\right)}{(n\zeta + 1)} \cdot  \frac{\left(n+1\right)\left(\zeta - 1\right)}{(n + 2)} }\\
&=\frac{ \left(n + 1\right)\left(n\zeta+\zeta + 1\right)}{\left(n\zeta+1\right)\left(n+2\right)}\Bigl(
\zeta^2 + \left(\zeta-1\right)^2-2\zeta\left(\zeta-1\right)\Bigr)\\
&=\frac{ \left(n + 1\right)\left(n\zeta+\zeta + 1\right)}{\left(n\zeta+1\right)\left(n+2\right)}.
\end{align*}
Thus, we conclude that
\begin{align*}
&\cosh^2{\beta}\cosh^2{\left(\gamma - \delta\right)}=  \frac{(n\zeta + 1)}{(n + 1)}  \frac{ \left(n + 1\right)\left(n\zeta+\zeta + 1\right)}{\left(n\zeta+1\right)\left(n+2\right)}=\frac{(n + 1)\zeta + 1}{n + 2}=\cosh^2{\delta}
\end{align*}
This completes the proof of Lemma~\ref{Main-technical-lemma}.
\end{proof}

\begin{proof}[{\bf Proof of Lemma~\ref{mass-sequence-existance}}]
\label{proof-of-mass-sequence-existance}

Define $h:\left[0,\infty\right)\rightarrow\mathbb{R}_+$ by
\begin{align*}
h\left(x\right):=\frac{x^{\frac{n-1}{2}} + x^{-\frac{n-1}{2}}}{x^{\frac{n+1}{2}} + x^{-\frac{n+1}{2}}}
\end{align*}
It can be directly checked that $h$ is differentiable on $[0,\infty)$, and that: 
\begin{align*}
  \label{h-result}
   h'(x)=
  \left(x - x^{-1}\right)  \cdot
	   \frac{
	    -\sum\limits_{j=1}^{n}x^{n-2j}
	    -nx^{-1}
	   }{
	     \left(x^{\frac{n+1}{2}} + x^{-\frac{n+1}{2}}\right)^2
	   },
\end{align*}
for $x>1$ and that $h_{+}'(0)=1$. Next, define $g:\left[1,\infty\right)\rightarrow\mathbb{R}$ by
\begin{align*}
g\left(y\right):=h\left(y+\sqrt{ y^2 - 1 }\right) - 1 + \left(y-1\right)\left(n-1+\frac{1}{\cosh{a}}\right).
\end{align*}
The function $g$ is also 
differentiablel, and for $y>1$ one has
\begin{align*} 
g'(y)=h'\left(y+\sqrt{ y^2 - 1 }\right) \cdot
\biggl(1+\frac{y}{\sqrt{ y^2 - 1 }}\biggr)+\left(n-1+\frac{1}{\cosh{a}}\right).
\end{align*}
Set $\xi_y = y+\sqrt{ y^2 - 1 }$,  and note that $\xi_y^{-1}=y-\sqrt{ y^2 - 1 }$.
The derivative  $g'(y)$   
can be now expressed by means of $\xi_y$ as follows:
\begin{align*}
g'\left(y\right) &=h'\left(\xi_y\right) \cdot \left(1+\frac{\xi_y+\xi_y^{-1} }{\xi_y-\xi_y^{-1} }\right)+\left(n-1+\frac{1}{\cosh{a}}\right) \\
& =\left(\xi_y - \xi_y^{-1}\right)  \cdot
	   \frac{
 \Bigl(    -\sum\limits_{j=1}^{n}\xi_y^{n-2j}
	    -n\xi_y^{-1} \Bigr) 
	   }{
 	     \left(\xi_y^{\frac{n+1}{2}} + \xi_y^{-\frac{n+1}{2}}\right)^2
	   }
\cdot \left(1+\frac{\xi_y+\xi_y^{-1}}{\xi_y-\xi_y^{-1}}\right)+\left(n-1+\frac{1}{\cosh{a}} \right).
\end{align*}
Thus, we conclude that
\begin{align}
g'(y) =   \frac{
	    -\sum\limits_{j=1}^{n}\xi_y^{n-2j}
	    -n\xi_y^{-1}
	   }{
	     \left(\xi_y^{\frac{n+1}{2}} + \xi_y^{-\frac{n+1}{2}}\right)^2
	   } \cdot 2\xi_y+\left(n-1+\frac{1}{\cosh{a}} \right).
\end{align}
The function $g(y)$ is smooth and defined for $y=1$, and hence
\begin{align*}
g'(1)&=\lim_{y \to 1^+}g'\left(y\right) =2\frac{
	    -\sum\limits_{j=1}^n 1
	    -n1^{-1}
	   }{
	     \left(1^{\frac{n+1}{2}} + 1^{-\frac{n+1}{2}}\right)^2
	   }+n-1+\frac{1}{\cosh{a}}=
	   \\& = 2\frac{
	    -n
	    -n
	   }{
	     \left(1 + 1\right)^2
	   }+n-1+\frac{1}{\cosh{a}}=
	   \frac{1}{\cosh{a}}-1<0.
\end{align*}
This implies in particular that  there exists $\epsilon>0$ for which
$g(1+\epsilon)<g(1)=0$. On the other hand, it is not hard to verify that for
large enough values of $y$, one has $g(y)>0$. The intermediate value theorem
implies that there exists $1+\epsilon < y_0 $ such that $g(y_0)=0$. Next
define:
\begin{align*}
\lambda&:=2y_0>2,\\
\xi&:=\xi_{y_0}>1,\\
b&:=\frac{2}{\left(n-1+\frac{1}{\cosh{a}}\right)}.
\end{align*}
Note that with these notations, the equation
$$
0=g(y_0)=h\left(y_0+\sqrt{y_0^2-1}\right)-1+\left(y_0-1\right)\left(n-1+\frac{1}{\cosh{a}}\right),
$$
can be rewritten as
$$
h\left(\xi\right)=1-\left(\frac{\lambda}{2}-1\right)\frac{2}{b}=1-\frac{\lambda-2}{b}.
$$
We are now in a position to provide an explicit formula for a sequence
$\left\{\alpha_j\right\}_{j=0}^{n+1}$ that satisfies the required conditions
of the lemma. Let,
\begin{align*}
\alpha_j &:= \frac{b}{\lambda - 2}\left(1 - \frac{\xi^{j-\frac{n+1}{2}} + \xi^{\frac{n+1}{2} - j}}{ \xi^\frac{n+1}{2} + \xi^{-\frac{n+1}{2}}}\right).
\end{align*}
Note that by definition 
\begin{align*}
\alpha_0 = \alpha_{n+1} &= \frac{b}{\lambda - 2}\left(1 - \frac{\xi^{-\frac{n+1}{2}} + \xi^{\frac{n+1}{2}}}{ \xi^\frac{n+1}{2} + \xi^{-\frac{n+1}{2}}}\right) = 0,
\end{align*}
and
\begin{align*}
\alpha_1 = \alpha_n &= \frac{b}{\lambda - 2}\left(1 - \frac{\xi^{1-\frac{n+1}{2}} + \xi^{\frac{n+1}{2}-1}}{ \xi^\frac{n+1}{2} + \xi^{-\frac{n+1}{2}}}\right)
= \frac{b}{\lambda - 2}\Bigl(1- h\left(\xi\right)\Bigr)\\ &= \frac{b}{\lambda - 2}\cdot\frac{\lambda - 2}{b}=1.
\end{align*}
Next, let us verify that the sequence $\alpha_j$ satisfies $(\ref{weight-equation})$ for $1 \le j \le n$. Indeed,
\begin{align*}
\alpha_{j-1} + \alpha_{j+1} &= \frac{b}{\lambda - 2}\left(1 - \frac{\xi^{j-1-\frac{n+1}{2}} + \xi^{\frac{n+1}{2} - j + 1}}{ \xi^\frac{n+1}{2} +\xi^{-\frac{n+1}{2}} }\right) \\
&+ \frac{b}{\lambda - 2}\left(1 - \frac{\xi^{j+1-\frac{n+1}{2}} + \xi^{\frac{n+1}{2} - j-1}  }{ \xi^\frac{n+1}{2} +\xi^{-\frac{n+1}{2}} }\right),
\end{align*}
which can be simplified to:
\begin{align*}
\alpha_{j-1} + \alpha_{j+1} &= \frac{b}{\lambda - 2}\left(\lambda - \frac{\left(\xi^{j-\frac{n+1}{2}} + \xi^{\frac{n+1}{2}-j}   \right)  }{\left( \xi^\frac{n+1}{2} + \xi^{-\frac{n+1}{2}} \right)}\left( \xi + \xi^{-1}    \right)\right) - b.
\end{align*}
Now, by substituting $\lambda$ for $\xi+\xi^{-1}$, we obtain, as required, that:    
\begin{align*}
\alpha_{j-1} + \alpha_{j+1} &= \lambda \frac{b}{\lambda - 2}\left(1 - \frac{\xi^{j-\frac{n+1}{2}} + \xi^{\frac{n+1}{2}-j}}{ \xi^\frac{n+1}{2} + \xi^{-\frac{n+1}{2}} }\right) - b = \lambda\alpha_j- b.
\end{align*}
Finally, to complete the proof of the lemma, it remains to show that  $\alpha_j > 0$ for every  $0 < j < n+1$.
For this end, rewrite $\alpha_j$ as:
\begin{align} \label{formula-for-alphaj}
\alpha_j = \frac{b}{\lambda - 2} - \frac{b}{\left(\lambda - 2\right)\left( \xi^\frac{n+1}{2} + \xi^{-\frac{n+1}{2}} \right)}\left( \xi^{j-\frac{n+1}{2}} + \xi^{\frac{n+1}{2} - j}     \right).
\end{align}
Note that  the expression  $\xi^t + \xi^{-t}$ is monotonically increasing with respect to $\left|t\right|$, and hence the expression
$(\xi^{j-\frac{n+1}{2}} + \xi^{\frac{n+1}{2} - j}) $
is monotonically increasing with respect to $\left|j - \frac{n+1}{2}\right|$.
Moreover, recall that $\lambda > 2$ and $b > 0$.
Therefore, the expression on the right hand side of~$(\ref{formula-for-alphaj})$
achieves a global maxima at $j=\frac{n+1}{2}$, and  strictly
decreases as $\left|j - \frac{n+1}{2}\right|$ increases, that is -- as $j$
approaches $0$ on one side,  or $n+1$ on the other. Since it was already shown
that $\alpha_0 = \alpha_{n+1} = 0$, we  conclude that $\alpha_j > 0$ for all
$0 < j < n+1$. This completes the proof of Lemma~\ref{mass-sequence-existance}.
\end{proof}

\noindent Oded Badt\\
\noindent School of Mathematical Science, Tel Aviv University, Tel Aviv, Israel\\
\noindent {\it e-mail}: odedbadt@post.tau.ac.il
\bigskip

\noindent Yaron Ostrover\\
\noindent School of Mathematical Science, Tel Aviv University, Tel Aviv, Israel\\
\noindent {\it e-mail}: ostrover@post.tau.ac.il


\begin{thebibliography}{9}


\bibitem{AleVinbSolo}
Alekseevskij, D. V., Vinberg, \`{E}. B., Solodovnikov, A. S. \emph{Geometry of spaces of constant curvature}, Geometry, II, 1--138,  
Encyclopaedia Math. Sci., 29, Springer, Berlin, 1993.

\bibitem{OB} Badt, O. \emph{ Results regarding billiard dynamics in hyperbolic simplices},  in preparation. M.Sc. Dissertation, Tel Aviv University, 2013. 


\bibitem{BeRa}
 B´edaride N.,  Rao, M.,
 \emph{Regular simplices and periodic billiard orbits}, 	arXiv:1112.2370.

\bibitem{BR}  Berry, M. V., Richens, P. J. \emph{Pseudointegrable systems in classical and quantum mechanics,} 
 Phys. D 2 (1981), no. 3, 495--512. 

\bibitem{DH}  Damour, T. \emph{Cosmological singularities, billiards and Lorentzian Kac-Moody algebras,} Deserfest, 55--76, Wor. Sci. Publ., Hackensack, NJ, 2006. 

\bibitem{DHN} Damour, T., Henneaux, M., Nicolai, H. \emph{Cosmological billiards,} Classical Quantum Gravity 20 (2003), no. 9, R145-R200.

\bibitem{DHN1} Damour, T., Henneaux, M., Nicolai, H . \emph{
Billiard dynamics of Einstein-matter systems near a spacelike singularity,}  Lectures on quantum gravity, 207--247, 
Ser. Cent. Estud. Cient., Springer, New York, 2005. 



\bibitem{Galp} 
Galperin, G. A. 
\emph{The concept of the center of mass of a system of mass points in spaces of constant curvature},.
Soviet Math. Dokl. 38 (1989), no. 2, 367-371.

\bibitem{VorGalSte}  Galperin, G. A., St\"epin, A. M., Vorobets, Ya. B. \emph{Periodic billiard trajectories in polygons}, (Russian) Uspekhi Mat. Nauk 46 (1991), no. 5(281), 165--166; translation in Russian Math. Surveys 46 (1991), no. 5, 204--205


\bibitem{Gut} Gutkin, E. \emph{Billiard dynamics: a survey with the emphasis on open problems,} Regul. Chaotic Dyn. 8 (2003), no. 1, 1--13. 

\bibitem{KT} Kozlov, V.,  Treshch\"ev D. \emph{Billiards -- A genetic introduction to the dynamics of systems with
impacts,}  Amer. Math. Soc., Providence, RI, 1991.

\bibitem{MT} Masur, H., Tabachnikov, S. \emph{Rational billiards and flat structures,} In: Handbook of dynamical systems, Vol. 1A, 1015--1089,  Amsterdam, 2002. 

\bibitem{Min}  Misner, C. W.  \emph{The mixmaster cosmological metrics}, In: Deterministic chaos in general relativity (Kananaskis, AB, 1993), 317--328, NATO Adv. Sci. Inst. Ser. B Phys., 332, Plenum, 
New York, 1994. 



\bibitem{Stahl}
Stahl, S.
\emph{Mass in the hyperbolic plane}, arXiv:0705.3448.

\bibitem{Tab}  Tabachnikov, S. \emph{Geometry and billiards,}  Student Mathematical Library, 30, Amer. Math. Soc., Providence, RI, 2005.

\bibitem{Ves}  Veselov, A. \emph{Confocal surfaces and integrable billiards on the sphere and in the Lobachevsky
space}, J. Geom. Phys., 7 (1990), 81--107.




\end{thebibliography}
\end{document}